\documentclass[sn-mathphys-num]{sn-jnl}

\usepackage{graphicx}%
\usepackage{caption}
\usepackage{float}%
\usepackage{multirow}%
\usepackage{amsmath,amssymb,amsfonts}%
\usepackage{amsthm}%
\usepackage{mathrsfs}%
\usepackage[title]{appendix}%
\usepackage{xcolor}%
\usepackage{textcomp}%
\usepackage{manyfoot}%
\usepackage{booktabs}%
\usepackage{algorithm}%
\usepackage{algorithmicx}%
\usepackage{algpseudocode}%
\usepackage{listings}%
\usepackage[short,nocomma]{optidef} 
\usepackage{cleveref}
\usepackage{tcolorbox}
\usepackage{tikz}%
\usepackage{subcaption}%
\usepackage{graphicx}%
\usepackage{soul}
\usepackage{tabularray}
\usepackage{needspace}%
\usepackage{adjustbox}
\usepackage{caption}
\usepackage{hyperref}
\usetikzlibrary{quantikz2}%

\theoremstyle{thmstyleone}%
\newtheorem{theorem}{Theorem}% 
\newtheorem{proposition}[theorem]{Proposition}% 

\theoremstyle{thmstyletwo}%

\theoremstyle{thmstylethree}%

\raggedbottom

\newcommand\blfootnote[1]{%
  \begingroup
  \renewcommand\thefootnote{}\footnote{#1}%
  \addtocounter{footnote}{-1}%
  \endgroup
}

\begin{document}

%%==================================%%
%% Title %%
%%==================================%%
\title[Article Title]{Optimizing Multiple-Control Toffoli Quantum Circuit Design with Constraint Programming\footnote{This paper is an extended version of the conference paper:
\textit{Jihye Jung, Kevin Dalmeijer, and Pascal Van Hentenryck. A New Optimization Model for Multiple-Control Toffoli Quantum Circuit Design. In 30th International Conference on Principles and Practice of Constraint Programming (CP 2024). Leibniz International Proceedings in Informatics (LIPIcs), Volume 307, pp. 16:1-16:20, Schloss Dagstuhl – Leibniz-Zentrum für Informatik (2024),} \url{https://doi.org/10.4230/LIPIcs.CP.2024.16}.}}

\author*[1]{\fnm{Jihye} \sur{Jung}}\email{jihye.jung@gatech.edu}
\author[1]{\fnm{Kevin} \sur{Dalmeijer}}\email{dalmeijer@gatech.edu}
\author[1]{\fnm{Pascal} \sur{Van Hentenryck}}\email{pvh@gatech.edu}

\affil*[1]{\orgdiv{H. Milton Stewart School of Industrial and Systems Engineering}, \orgname{Georgia Institute of Technology}, \orgaddress{\street{North Avenue}, \city{Atlanta}, \postcode{30332}, \state{Georgia}, \country{United States}}}

%%==================================%%
%% Abstract %%
%%==================================%%
\abstract{As quantum technology advances, the efficient design of quantum circuits has become an important area of research.
This paper provides an introduction to the MCT quantum circuit design problem for reversible Boolean functions with the necessary background in quantum computing to comprehend the problem.
While this is a well-studied problem, optimization models that minimize the true objective have only been explored recently.
This paper introduces a new optimization model and symmetry-breaking constraints that improve solving time by up to two orders of magnitude compared to earlier work when a Constraint Programming solver is used.
Experiments with up to seven qubits and using up to 15 quantum gates result in several new best-known circuits, obtained by any method, for well-known benchmarks.
Several in-depth analyses are presented to validate the effectiveness of the symmetry-breaking constraints from multiple perspectives.
Finally, an extensive comparison with other approaches shows that optimization models may require more time but can provide superior circuits with optimality guarantees.}

\keywords{Constraint Programming, Quantum Circuit Design, Reversible Circuits, Symmetry Breaking}

\maketitle
\blfootnote{ORCID: (Jihye Jung) 0000-0002-5217-020X, (Kevin Dalmeijer) 0000-0002-4304-7517, (Pascal Van Hentenryck) 0000-0001-7085-9994}

%%==================================%%
%% Section 1 %%
%%==================================%%
\section{Introduction}\label{sec:introduction}

Quantum computing has gained significant attention due to its potential for achieving computational supremacy, as demonstrated by well-known quantum algorithms such as Shor’s algorithm for prime factorization \cite{shor1999polynomial} and Grover’s algorithm for unstructured search \cite{grover1996fast}.
With the rapid advancements in quantum technologies in recent years, the efficient design of quantum circuits has emerged as a crucial area of research. A fundamental challenge in quantum circuit design is to implement a \emph{target function}, using gates selected from a \emph{preset gate library}, while minimizing \emph{circuit costs} according to a given metric. Different choices for these three components lead to multiple versions of the target problem.
This work considers a target problem specified as follows:

\begin{itemize}
\item[] \textbf{Target function.} \emph{Reversible Boolean function}, a key component at the core of most quantum algorithms.
\item[] \textbf{Preset gate library.} \emph{Multiple-Control Toffoli (MCT) gate}, a typical high-level gate commonly used to represent reversible Boolean functions.
\item[] \textbf{Circuit cost.} \emph{Quantum cost}, the number of low-level quantum gates required to realize the high-level gates in the circuit.
\end{itemize}

These concepts are introduced in detail in Section~\ref{sec:terminology}, with an appropriate background in quantum computing provided to the reader.
The primary goal of this paper is to introduce a new optimization model to design quantum circuits within the setting defined above.
For brevity, this problem will be referred to as the \emph{MCT quantum circuit design problem}.

\subsection{Literature Review}
For similar target problems, early methods for quantum circuit design were developed based on intuitive observations and preconfigured circuit templates.
Studies in this stage constructed a base library of small-scaled circuits to heuristically synthesize larger circuits for reversible Boolean functions \cite{maslov2005toffoli, golubitsky2011study}. 
Post-synthesis algorithms, such as relocation algorithms \cite{abdessaied2013exact, prasad2006data, maslov2007techniques, maslov2008quantum}, were introduced to further improve these circuits, although the improved results cannot guarantee optimality.

Several papers have used different representations of reversible Boolean functions to develop efficient synthesis algorithms.
Cycle representation was used to devise several decomposition-based approaches \cite{saeedi2010reversible, zhu2018reversible}. 
In particular, reference \cite{saeedi2010reversible} reports an average cost improvement of 20\% for benchmark functions with up to 20 qubits. Other approaches have leveraged the Reed-Muller expansion to decompose reversible Boolean functions into exclusive-OR terms of Boolean products.
The Reed-Muller expansion and the corresponding decision diagrams have appeared in \cite{gupta2006algorithm} and \cite{lin2014rmdds} to address functions with up to 30 and 15 qubits, respectively.
Reference \cite{lin2014rmdds} demonstrates a cost improvement of approximately 35\% compared to previous studies, using a time limit of 600 seconds. 
A comparative analysis of decision diagram approaches for Reed-Muller expansion is also proposed \cite{wille2010effect}.

Another heuristic approach uses a quantum multiple-valued decision diagram, an efficient representation for matrices, to handle both reversible and irreversible functions \cite{zulehner2017skipping}.
The authors demonstrate the high scalability of the algorithm, handling functions with states up to length 156. 
An A* algorithm was applied to the problem with an approximate heuristic function deduced from observations on state transitions \cite{datta2012synthesis}, while others used heuristics based on isomorphic subgraph matching \cite{krishna2014efficient} and window optimization \cite{soeken2010window}. 
Evolutionary algorithms such as genetic algorithms \cite{alfailakawi2015depth}, adaptive genetic algorithms \cite{sasamal2015reversible}, genetic programming \cite{abubakar2017reversible}, tabu search \cite{de2018reversible}, and particle swarm optimization \cite{datta2012particle} were also suggested to obtain near-optimal solutions. 
While these methods offer different trade-offs between computation time and circuit quality, they are all heuristic and do not provide optimality guarantees.

Meanwhile, exact synthesis methodologies have been proposed to obtain optimal quantum circuits. 
Reference \cite{grosse2009exact} iteratively solves satisfiability problems to obtain a quantum circuit with the minimum number of gates.
This exact approach handles benchmark functions with three up to six qubits within a maximum of 5,000 seconds of computing time.
A method based on quantified Boolean formula satisfiability, a generalized version of Boolean satisfiability, is also proposed to handle the same problem \cite{wille2008quantified}.
The authors report results for functions with four up to six qubits using a 2,000-second time limit.
The exact methods notably handle relatively small functions but find better solutions within this space.
Both references \cite{grosse2009exact} and \cite{wille2008quantified} report results in terms of quantum cost, which is the total number of low-level quantum gates required to implement a sequence of high-level logical gates. 
However, they do not directly minimize this objective; instead, they minimize the number of high-level gates.

An optimization model was introduced to directly minimize the quantum cost in \cite{JungChoi2021-MultiCommodityNetwork}.
The authors use a multi-commodity flow-based model that is solved with a Mixed Integer Programming (MIP) solver.
The method is applied to functions with three to six qubits, and significant improvements in quantum cost between 18.8\% and 68.6\% are observed.
This provides a strong motivation to optimize quantum costs directly.
While good results are obtained for small functions, the method does not scale well beyond seven gates due to the exponential number of binary variables in the model.
Since the prior version of this manuscript, Jung and Choi \cite{jung2025new} built on their earlier model  \cite{JungChoi2021-MultiCommodityNetwork} and improved the results. However, this paper 
presents improvements that go well beyond \cite{jung2025new} in both solution quality and scalability. 
Specifically, the proposed approach successfully discovers new best-known circuits for several benchmark instances that were not identified by \cite{jung2025new}. 
Moreover, while the previous study addressed instances of up to six qubits or eight gates, this manuscript extends the scope to instances involving up to seven qubits and fifteen gates. 
This expansion is made possible by the enhanced computational efficiency of the formulation proposed in this work.

\subsection{Contributions}
This paper introduces a new optimization model to minimize quantum cost directly.
Compared to \cite{JungChoi2021-MultiCommodityNetwork} and \cite{jung2025new}, the new model is easier to implement, requires exponentially fewer binary variables, and has a beneficial block-angular structure.
Furthermore, the paper demonstrates the advantage of Constraint Programming (CP) in solving this model.
The key contributions can be summarized as follows:
\begin{itemize}
	\item The paper introduces a new optimization model and new symmetry-breaking constraints for MCT quantum circuit design.
	\item The new model allows both CP and MIP solvers to significantly improve solving time, with up to \emph{two orders of magnitude} speedup when the CP solver is used.
	\item Experiments with up to seven qubits and using up to 15 quantum gates result in several new best-known circuits for well-known benchmarks.
	\item An extensive comparison with other approaches shows that optimization models may require more time, but can provide superior circuits with guaranteed optimality.
    \item In-depth experiments and analyses demonstrate the benefit of the novel symmetry-breaking constraints.
\end{itemize}

\noindent
The remainder of the paper is organized as follows.
Section~\ref{sec:terminology} presents the necessary terminology and provides the problem description.
Section~\ref{sec:optimization-model} introduces the new optimization model, while Section~\ref{sec:symmetry-breaking-constraints} introduces new symmetry-breaking constraints for the target problem.
The computational results are presented by Section~\ref{sec:computational-experiments}, and Section~\ref{sec:conclusion} provides the conclusion.

%%==================================%%
%% Section 2 %%
%%==================================%%
\section{Terminology}
\label{sec:terminology}

As discussed in Section~\ref{sec:introduction}, this paper considers the design of quantum circuits for \emph{reversible Boolean functions} using \emph{MCT gates} to minimize the \emph{quantum costs} of the resulting circuit. 
The relevant definitions are introduced here, along with brief introduction into quantum computing.
Example~\ref{ex:example1} presents a running example that is used throughout this section.

\subsection{Basics of Quantum Computing}

\bmhead{Qubit and quantum state} 
A state of a quantum system is represented by \emph{qubits}, analogous to classical bits in classical computers. 
While bits assume values of 0 or 1 to define a single \emph{basis state} (commonly denoted by binary vector $|0\rangle = \begin{bmatrix} 1 & 0 \end{bmatrix}$ or $|1\rangle = \begin{bmatrix} 0 & 1 \end{bmatrix} $ in the context of quantum computing), qubits may represent a \emph{superposed state} (i.e., a complex vector $|\psi\rangle$) formed as a convex combination of the basis states.
In many quantum computers, qubits are implemented by various physical objects that can implement the superposed states, such as electrons, ions, photons, and superconducting circuits.
Algebraically, a quantum state of a qubit is represented as below.

\begin{equation}
|\psi\rangle  = \alpha |0\rangle + \beta |1\rangle 
= \alpha 
\begin{bmatrix} 1 \\ 0 \end{bmatrix} 
+ \beta 
\begin{bmatrix} 0 \\ 1 \end{bmatrix} 
 = 
\begin{bmatrix} \alpha \\ \beta \end{bmatrix},
\quad \alpha, \beta \in \mathbb{C}.
\label{eq:terminology_quantum-state}
\end{equation}
The qubit stores the likelihood of observing state 0 or 1 upon \textit{measurement}, while the measurement probability of each single state is computed by the 2-norm of each complex coefficient $\alpha$ and $\beta$, i.e., $|\alpha|^2$ and $|\beta|^2$.

\bmhead{Quantum gate} A \emph{quantum gate} operates on qubits to transition the system to a new state based on the specification.
Not every state transition can be realized by a single elementary gate, and multiple quantum gates may be combined into a \emph{quantum circuit} to represent more complicated functions.
These elementary quantum gates are realized through individual physical stimuli on corresponding qubits.

A single-qubit quantum gate is algebraically represented by a $2\times2$ unitary matrix $U$, that is, $U^{\dagger}U =UU^{\dagger} = I$, where $U^{\dagger}$ is the Hermitian conjugate.
\begin{equation}
  U |\psi \rangle = \alpha U|0 \rangle + \beta U |1 \rangle 
 = \begin{bmatrix} \alpha u_{11} + \beta u_{12} \\ \alpha u_{21} + \beta u_{22} \end{bmatrix}, \quad 
  U = \begin{bmatrix} u_{11} & u_{12} \\u_{21} & u_{22} \end{bmatrix}, u_{ij} \in \mathbb{C}.
\label{3_EqB_QuantumGate}
\end{equation}
Equation~(\ref{3_EqB_QuantumGate}) shows how a $2\times2$ unitary matrix $U$ transforms the given quantum state $|\psi\rangle $ from Equation~(\ref{eq:terminology_quantum-state}).
The matrix $U$ has four complex elements $u_{11}$, $u_{12}$, $u_{21}$ and $u_{22}$, where the first index denotes the row, and the second index denotes the column.
By multiplying $U$ to the state $|\psi\rangle $, $\alpha$ changes to $\alpha u_{11} + \beta u_{12}$, and $\beta$ to $\alpha u_{21}+\beta u_{22}$, respectively.
As a result, the measurement probability of each basis state has changed compared to the initial state.
Quantum gates may also be applied simultaneously to $k$ qubits at once, which corresponds to applying a $2^k \times 2^k$ matrix.

For the purpose of this paper, it is important to note from \eqref{eq:terminology_quantum-state} and \eqref{3_EqB_QuantumGate} that applying a quantum gate to a convex combination of basis states is equivalent to applying the quantum gate to each of the basis states and taking a convex combination of the results.
For the MCT quantum circuit design problem, it turns out to be sufficient to ensure that the circuit correctly transforms each basis state.
As a result, the remainder of this paper is completely discrete, and no complex numbers will be required.

\subsection{Reversible Boolean Function}
A reversible Boolean function is a bijective function where inputs and outputs are provided as binary strings of fixed length. 
This function is often presented in the form of a truth table.
It is noteworthy that reversible Boolean functions have been recognized as one of the fundamental operators in quantum computing, thus explored extensively in prior research on efficient quantum circuit synthesis \cite{saeedi2013synthesis}.  

Example~\ref{tab:example1tt} provides an example of a three-qubit reversible Boolean function. 
The specification defines a one-to-one mapping for each of the $2^3=8$ basis states. 
For instance, the input state (\texttt{qubit 1, qubit 2, qubit 3}) $ = (1, 1, 0)$ is mapped to the output state (\texttt{qubit 1, qubit 2, qubit 3}) $ = (0, 1, 1)$, or $110 \rightarrow 011$ for short.
It is sufficient to only specify the function for the basis states: when superposed states are involved, they can simply be decomposed into a convex combination of basis states, after which the function can be applied to each basis state according to the specification.

\begin{figure*}[t]
	\renewcommand\figurename{Example}
	\centering
	\begin{subfigure}{0.47\textwidth}
		\begin{tabular}{c|cc|c}
			\toprule
			Input & Output & Input & Output\\
			\cmidrule(lr){1-2} \cmidrule(lr){3-4}
			000 & 001 & 100 & 101\\
			001 & 000 & 101 & 100\\
			010 & 110 & 110 & 011\\
			011 & 111 & 111 & 010\\
			\bottomrule
		\end{tabular}
		\caption{Truth Table (completely specified).}
		\label{tab:example1tt}
	\end{subfigure}
	\begin{subfigure}{0.47\textwidth}
		\resizebox{0.73\textwidth}{!}{%
			\begin{quantikz}
				\lstick{\scriptsize $q=1$}	& \targ{}	\wire[l][1]["d=1"{above=0.2,pos=-0.3}]{a} & \ctrl{2} \wire[l][1]["d=2"{above=0.2,pos=-0.1}]{a}	&	\wire[l][1]["d=3"{above=0.2,pos=0}]{a}		& \\
				\lstick{\scriptsize $q=2$}	& \ctrl{-1}	& \control{}&			& \\
				\lstick{\scriptsize $q=3$}	& 			& \targ{}	&	\targ{}	&
			\end{quantikz}
		}
		\caption{Implementing Circuit.}
		\label{fig:example1c}
	\end{subfigure}
	\caption{Truth Table for Function Specification and Implementing Circuit (interactive: \href{https://algassert.com/quirk\#circuit=\%7B\%22cols\%22\%3A\%5B\%5B\%22Chance\%22\%2C\%22Chance\%22\%2C\%22Chance\%22\%5D\%2C\%5B\%22X\%22\%2C\%22\%E2\%80\%A2\%22\%5D\%2C\%5B\%22Chance\%22\%2C\%22Chance\%22\%2C\%22Chance\%22\%5D\%2C\%5B\%22\%E2\%80\%A2\%22\%2C\%22\%E2\%80\%A2\%22\%2C\%22X\%22\%5D\%2C\%5B\%22Chance\%22\%2C\%22Chance\%22\%2C\%22Chance\%22\%5D\%2C\%5B1\%2C1\%2C\%22X\%22\%5D\%2C\%5B\%22Chance\%22\%2C\%22Chance\%22\%2C\%22Chance\%22\%5D\%5D\%2C\%22init\%22\%3A\%5B1\%2C1\%5D\%7D}{\texttt{algassert.com/quirk}}).}
	\label{ex:example1}
\end{figure*}

\subsection{Multiple Control Toffoli (MCT) Circuit}

\emph{MCT circuits} consist of a sequence of \emph{MCT gates}.
Example~\ref{fig:example1c} provides a circuit that meets the specification of Example~\ref{tab:example1tt}. It has three horizontal lines (one for each qubit $q$) and three MCT gates (one per column $d$).
An MCT gate consists of one \emph{target qubit} with the $\oplus$ symbol and zero or more \emph{control qubits} with the \emph{$\bullet$} symbol.
Control qubits do not have to be adjacent, and vertical lines connect the control qubits to the target qubit.
For a given input, the circuit is read from left to right, and the MCT gates are applied one at a time.
Transitions follow the following rule: \emph{if all the control qubits are in state 1, then the target qubit is flipped}.

For example, consider the input $110$ and start from the very left of the given circuit Example~\ref{fig:example1c}.
The top line has state $1$, the middle line state $1$ and the bottom line state $0$.
The first gate has one control qubit on line two.
It follows that all control qubits are in state 1.
As a result, the target qubit (\texttt{qubit 1}) is flipped, changing the state to $010$ after the first gate.
The second gate has two control qubits, but they are not all in state 1 (\texttt{qubit 1} is in state 0) so nothing happens.
The third gate does not have any control bits, so the target qubit is flipped.
This results in the output $011$.
That is, the total transition is $110 \rightarrow 010 \rightarrow 010 \rightarrow 011$, meeting the specification.
It can be checked that the circuit meets the specifications for the other input states as well.

An important property of MCT circuits is that they are \emph{reversible}, i.e., they perform the inverse operation when read from right to left \cite{saeedi2010reversible}.
Therefore, MCT circuits are a natural candidate to represent reversible Boolean functions.
In fact, it is well-known that every reversible Boolean function can be represented in this way.

\subsection{Quantum Costs}
To implement an MCT circuit in practice, each MCT gate is decomposed into elementary quantum gates.
The number of elementary quantum gates is a well-established proxy for the cost of the MCT circuit, known as the \emph{quantum cost}.
Table~\ref{tab:quantum_cost} summarizes the best-known quantum cost $f(c)$ for an MCT gate that uses a total of $c \ge 0$ control qubits \cite{barenco1995elementary, maslov2003improved, grosse2009exact}.
Note that the costs change based on the number of \emph{slack qubits} that are available and that are not used in the MCT gate otherwise.
The cost of the circuit in Example~\ref{ex:example1} is $f(1) + f(2) + f(0) = 7$.
Note that the costs in the table may go down in the future as better decompositions are found.
It can also be seen that the quantum cost of an MCT gate tends to increase rapidly as more control qubits are added.

\begin{table}[t]
    \centering
	\begin{tabular}{c  @{\hspace{1.5em}} c @{\hspace{1.5em}} c @{\hspace{1.5em}} c @{\hspace{1.5em}} c @{\hspace{1.5em}} c @{\hspace{1.5em}} c @{\hspace{1.5em}} c @{\hspace{1.5em}} c}
    \toprule
    & \multicolumn{8}{c}{\textbf{Control qubits $p$}}   \\ \cmidrule{2-9}
    \textbf{Slack qubits}  & 0 & 1 & 2 & 3  & 4  & 5  & 6   & $\le$ 7 \\ \hline 
    0    & 1  & 1  & 5  & 13 & 29 & 62 & 125    &  $2^{p+1}-3$ \\ \hline
    1    & $\cdot$  & $\cdot$  & $\cdot$  & $\cdot$  & $\cdot$  & 52 & 80     &  $\cdot$ \\ \hline
    2    & $\cdot$  & $\cdot$  & $\cdot$  & $\cdot$  & 26 & $\cdot$  & $\cdot$      &  $\cdot$ \\ \hline
    3    & $\cdot$  & $\cdot$  & $\cdot$  & $\cdot$  & $\cdot$  & 38 & $\cdot$      &  $\cdot$ \\ \hline
$\le$ 4  & $\cdot$  & $\cdot$  & $\cdot$  & $\cdot$  & $\cdot$  & $\cdot$  & 50     &  $\cdot$  \\ 
    \bottomrule
    \end{tabular}
\vspace{0.5\baselineskip}
\caption{Quantum Costs for MCT Gates (dots indicate the same cost as above; slack qubits are qubits that are available but not used in the MCT gate).}%
\label{tab:quantum_cost}%
\vspace{-0.5\baselineskip}
\end{table}

\subsection {Remarks on Incomplete Specification}

In Example~\ref{ex:example1}, the truth table was completely specified, but this does not always have to be the case: 
depending on the application, there may be specific qubits that are used in the computation but for which the output is uninteresting (\emph{don't care} qubits).
However, every circuit \emph{implementation} still represents a bijective function that assigns specific states to the don't cares, due to the reversible nature of quantum operators.
Note that don't cares apply only to the outputs, whereas the inputs are completely specified in practice.

\begin{figure*}[t]
	\renewcommand\figurename{Example}
    \begin{subfigure}{0.7\textwidth}
		\begin{tabular}{c|ccc|cc}
			\toprule
			Input & Output & Impl. \ref*{fig:example2c} & Input & Output & Impl. \ref*{fig:example2c}\\
			\cmidrule(lr){1-3} \cmidrule(lr){4-6}
			000 & 00- & 001 & 100 & 101 & 101\\
			001 & 00- & 000 & 101 & 100 & 100\\
			010 & 11- & 111 & 110 & 011 & 011\\
			011 & \hspace{0.1em}-\hspace{0.1em}-\hspace{0.1em}- & 110 & 111 & 010 & 010\\
			\bottomrule
		\end{tabular}
		\caption{Incompletely Specified Truth Table.}
		\label{tab:example2tt}
	\end{subfigure}
	\begin{subfigure}{0.28\textwidth}
		\resizebox{0.9\textwidth}{!}{%
			\begin{quantikz}
				\lstick{\scriptsize $q=1$}	& \targ{} \wire[l][1]["d=1"{above=0.2,pos=-0.3}]{a}	& 	\wire[l][1]["d=2"{above=0.2,pos=0}]{a}		& \\
				\lstick{\scriptsize $q=2$}	& \ctrl{-1}	& 			& \\
				\lstick{\scriptsize $q=3$}	& 			& 	\targ{}	&
			\end{quantikz}
		}
		\caption{Implementing Circuit.}
		\label{fig:example2c}
	\end{subfigure}
	\caption{Truth Table of an Incompletely Specified Function and its Implementing Circuit (interactive: \href{https://algassert.com/quirk\#circuit=\%7B\%22cols\%22\%3A\%5B\%5B\%22Chance\%22\%2C\%22Chance\%22\%2C\%22Chance\%22\%5D\%2C\%5B\%22X\%22\%2C\%22\%E2\%80\%A2\%22\%5D\%2C\%5B\%22Chance\%22\%2C\%22Chance\%22\%2C\%22Chance\%22\%5D\%2C\%5B1\%2C1\%2C\%22X\%22\%5D\%2C\%5B\%22Chance\%22\%2C\%22Chance\%22\%2C\%22Chance\%22\%5D\%5D\%2C\%22init\%22\%3A\%5B1\%2C1\%5D\%7D}{\texttt{algassert.com/quirk}}).}
	\label{ex:example2}
\end{figure*}

Example~\ref{ex:example2} turns the complete specification of Example~\ref{ex:example1} into an incomplete specification by replacing some of the output qubits by ``-'' (don't care) instead of 0 or 1.
Note that the circuit in Example~\ref{fig:example1c} is still valid, but with the additional freedom, it might be possible to find a circuit with a cost lower than 7.
Example~\ref{fig:example2c} shows that a better circuit can be found.
The outputs of this implementation are added to Example~\ref{tab:example2tt} under ``Impl. \ref*{fig:example2c}''.
These outputs are different from the circuit in Example~\ref{ex:example1}, but they both meet the incomplete specification.
At a cost of only $f(1) + f(0) = 2$, the new circuit is a better implementation.

%%==================================%%
%% Section 3 %%
%%==================================%%
\section{Optimization Model}
\label{sec:optimization-model}

This section presents a new optimization model to design minimum-cost MCT quantum circuits that meet a given specification. The presentation assumes that the maximum number of gates is fixed and denoted by $m$. 
The new model is provided as Model~\eqref{form:qrcsp} throughout equations \eqref{eq:qrcsp:objective} -- \eqref{eq:qrcsp:flow_vars}, and the different components are introduced over the next several paragraphs.
Constraints~\eqref{eq:qrcsp:flip_implication}-\eqref{eq:qrcsp:keep_implication} are introduced as logical constraints first to clearly state their intent, after which a linear implementation is provided as one possible implementation.
An overview of the nomenclature is provided in Table~\ref{tab:symbols} for convenience.
While \cite{JungChoi2021-MultiCommodityNetwork} and this paper both use network flows to model quantum state transitions, there are significant differences between the models that are discussed at the end of this section.

\subsection{Base Network and Multicommodity Flows}

The new model uses network flows to model the state transitions caused by the circuit. Before formally introducing this part of the model, this paragraph aims to give some intuition through Figure~\ref{fig:network-example}.
The graph in this figure has vertices $(\sigma, d)$ that indicate being in state $\sigma \in \Omega$ before gate $d \in D = \{1,\ldots,m\}$ is applied.
The arrows show the transitions when state $010$ is provided as an input to Circuit~\ref{fig:example2c}.
That is, gate $d=1$ carries out the transition $010\rightarrow 110$ (vertex $(010, 1)$ to $(110, 2)$), and gate $d=2$ carries out the transition $110\rightarrow 111$ (vertex $(110, 2)$ to $(111, 3)$), for a total transition of $010\rightarrow 111$. 

Which state transitions are available depends on the design of the circuit. The dotted lines in Figure~\ref{fig:network-example} show the state transitions that are allowed when the circuit design variables represent Circuit~\ref{fig:example2c}.
Because of the reversible nature of MCT gates, the dotted lines create bijections between the states.
As such, flow entering vertex $(\sigma, 1)$ is pushed to a unique vertex $(\sigma', m+1)$, which represents the transition $\sigma \rightarrow \sigma'$ that is caused by the circuit.
To make sure that the state transitions match the specification, a source $S$ and a sink $T$ are added.
The source pushes the flow to a particular input state, while the sink only accepts flow from the correct output states.
In the example, input state $010$ has output specification $11-$ (Table~\ref{tab:example2tt}).
As such, the source pushes flow to $010$, and the sink only accepts flow from $110$ and $111$.
If no flow from $S$ to $T$ is possible, then the circuit fails to meet the specification, and the current assignment of the circuit design variables is infeasible.

A separate flow network is introduced for each input state to ensure that the circuit meets all the specifications.
The model is further improved by grouping input states with the same output specification into \emph{commodities}.
For a completely specified function, each commodity is associated with a single input state. 
In contrast, for an incomplete function, multiple input states can be grouped together based on the output specification.
The source pushes a unit flow to each of the input states in the group.
The flows remain separated due to the bijections, and are eventually collected by the sink.
Note that this grouping is only possible when input states have the same output specification, and therefore the arcs connecting to the sink are the same.

\begin{figure}[t]
	\includegraphics[width=\textwidth]{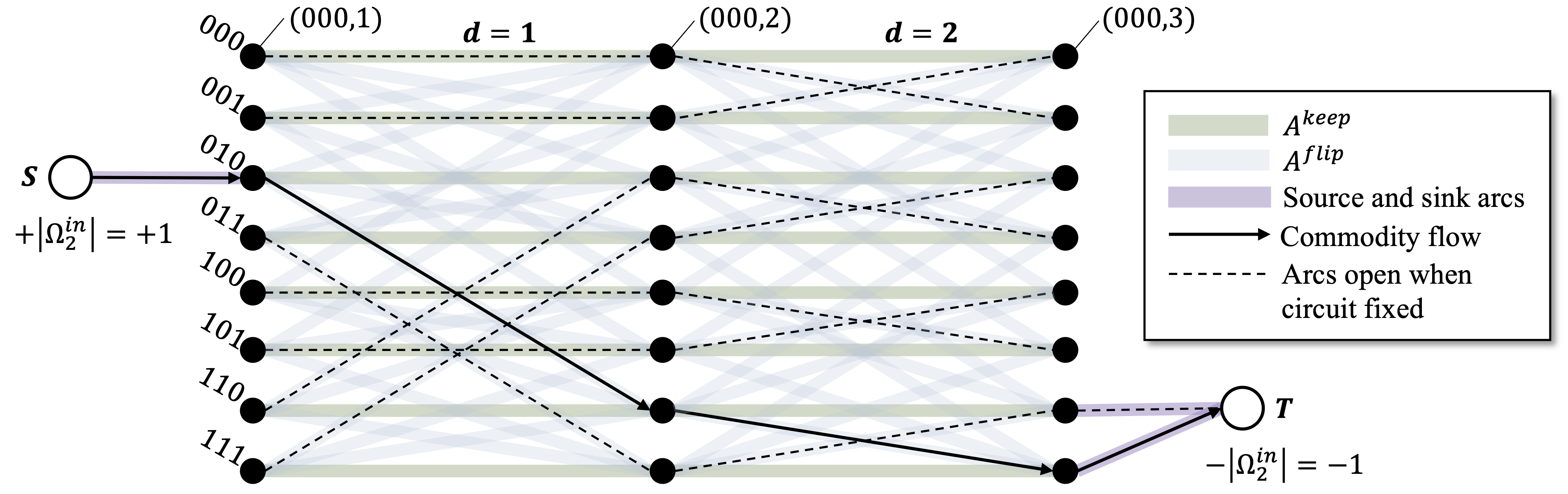}
	\caption{Flow Network Example for $k=2$ corresponding to Example~\ref{ex:example2}.}
	\label{fig:network-example}
\end{figure}

The set $\Omega$ of basis states is given by the $2^n$ binary vectors of length $n$.
For example, $\Omega = \{000, 001, 010, 011, 100, 101, 110, 111\}$ in Example~\ref{ex:example2}.
States with the same (possibly incomplete) output specification will be modeled together and grouped into commodities $k \in K$.
Each commodity is represented by the set $\Omega^\text{in}_k \subseteq \Omega$ of corresponding input states.
Again, using Example 2: $\Omega^\text{in}_1 = \{000,001\}$ (output specification $00-$), $\Omega^\text{in}_2 = \{010\}$ (output specification $11-$), etc.
The set $\Omega^\text{out}_k$ defines the permitted output states for each commodity $k \in K$.
In the example, this yields $\Omega^\text{out}_1 = \{000, 001\}$ (both match $00-$), $\Omega^\text{out}_2 = \{110, 111\}$ (both match $11-$), etc.
Note that the sets $\Omega^\text{in}_k$ partition $\Omega$ by definition, while the sets $\Omega^\text{out}_k$ may overlap and together cover $\Omega$.

Next, flow networks are defined so they can model the state transitions throughout the circuit.
Figure~\ref{fig:network-example} continues to provide a running example for this paragraph.
A graph $G_k = (V, A_k)$ is defined for each commodity $k\in K$.
The vertex set $V$ consists of a source $S$, a sink $T$, and the vertices $(\sigma, d)$ for $\sigma \in \Omega$, $d \in D \cup \{m+1\}$.
Each vertex $(\sigma, d)$ represents being in state $\sigma$ right after gate $d-1$ or right before gate $d$, i.e., gate $d$ is to be applied next.
The arc set $A_k$ of commodity $k \in K$ consists of four components:
\begin{itemize}
	\item \emph{Source arcs}: A set of arcs that connect the source node to the commodity input states, $(S, (\sigma, 1))$ $\forall \sigma \in \Omega^\text{in}_k$.
	\item \emph{Sink arcs}: A set of arcs that connect permitted output states to the sink node, $((\sigma, m+1), T)$, $\forall \sigma \in \Omega^\text{out}_k$.
	\item \emph{Flip arcs}: A set of arcs $A^\text{flip}_k$ that represent the cases when gate $d \in D$ acts on state $\sigma \in \Omega$ by flipping bit $q \in Q$. Let $\sigma \oplus q$ denote state $\sigma$ with bit $q$ flipped. Then $A^\text{flip}_k$ consists of arcs $((\sigma, d), (\sigma \oplus q, d+1))$, $\forall d \in D, \sigma \in \Omega, q \in Q$.
	\item \emph{Keep arcs}: A set of arcs $A^\text{keep}_k$ that represent the cases when gate $d \in D$ keeps state $\sigma \in \Omega$ the same even after the gate operation. That is, the keep arcs are given by $((\sigma, d), (\sigma, d+1))$, $\forall d\in D, \sigma \in \Omega$.
\end{itemize}

\noindent As shown in the Figure~\ref{fig:network-example}, each of four arc types are mutually exclusive and collectively exhaustive at the same time.
The formulation presented in the following section is developed based upon the network and the associated notations introduced above.

\subsection{Formulation}
The next paragraphs specify and explain Model~\eqref{form:qrcsp}.

\begin{mini!}
%
% Variable
	{}
%
% Objective
	{\sum_{d \in D} \sum_{j \in Q} f(j-1) y_j^d, \label{eq:qrcsp:objective}}
%
% Model label
	{\label{form:qrcsp}}
%
% Function closure for objective
	{}
%
% Constraints
%
	\addConstraint
	{t_q^d + w_q^d}
	{\le 1}
	{\forall q \in Q, d \in D, \label{eq:qrcsp:target_or_control}}
	\addConstraint
	{\sum_{q \in Q} t_q^d}
	{\le 1}
	{\forall d \in D, \label{eq:qrcsp:one_target}}
	\addConstraint
	{w_q^d}
	{\le \sum_{r \in Q} t_r^d}
	{\forall q \in Q, d \in D, \label{eq:qrcsp:control_needs_target}}
	\addConstraint
	{\sum_{j \in Q} j y^d_j}
	{= \sum_{q \in Q} t_q^d + \sum_{q \in Q} w^d_q}
	{\forall d \in D, \label{eq:qrcsp:def_sum_control1}}
	\addConstraint
	{\sum_{j \in Q} y^d_j}
	{\le 1}
	{\forall d \in D, \label{eq:qrcsp:def_sum_control2}}
	\addConstraint
	{\sum_{a \in \delta^+_k(v)} x^k_a - \sum_{a \in \delta^-_k(v)} x^k_a}
	{= \begin{cases}
			\lvert \Omega^\text{in}_k \rvert & \textrm{ if } v = S\\
			-\lvert \Omega^\text{in}_k \rvert & \textrm{ if } v = T\\
			0 & \textrm { else}
	\end{cases} \quad}
	{\forall k \in K, v \in V, \label{eq:qrcsp:flow_balance}}
	\addConstraint
	{\hspace{3.4cm}\mathllap{\left( \bigvee_{q^0 \in Q^0_{\sigma(a)}} \left(w_{q^0}^{d(a)} = 1\right)\right) \bigvee \left(t_{q(a)}^{d(a)} = 0\right) \Longrightarrow x_a^k = 0}}
	{}
	{\forall k \in K, a \in A^\text{flip}_k,\label{eq:qrcsp:flip_implication}}
	\addConstraint
	{\hspace{3.4cm}\mathllap{\left( \bigwedge_{q^0 \in Q^0_{\sigma(a)}} \left(w_{q^0}^{d(a)} = 0\right) \right) \bigwedge \left(\sum_{q \in Q} t_q^{d(a)} = 1\right) \Longrightarrow x_a^k = 0}}
	{}
	{\forall k \in K, a \in A^\text{keep}_k, \label{eq:qrcsp:keep_implication}}
 	\addConstraint
 	{t_q^d, w_q^d, y_j^d}
 	{\in \{0,1\}}
 	{\forall q, j \in Q, d \in D, \label{eq:qrcsp:design_vars}}
 	\addConstraint
	{x_a^k}
	{\in \{0,1\}}
	{\forall k \in K, a \in A_k. \label{eq:qrcsp:flow_vars}}
\end{mini!}

\begin{table}
\centering
\begin{tabular}{p{0.06\textwidth}p{0.84\textwidth}}
	\toprule
	Symbol & Definition \\
	\midrule
	\multicolumn{2}{l}{\textbf{Circuit Design: \labelcref{eq:qrcsp:target_or_control}-\labelcref{eq:qrcsp:control_needs_target}, \labelcref{eq:qrcsp:design_vars}}} \\[.05cm]
	$Q$ & $=\{1, \hdots, n\}$ set of qubits.\\
	$D$ & $=\{1, \hdots, m\}$ set of gates.\\
	$t_q^d$ & variable with value 1 if qubit $q \in Q$ is the target qubit of gate $d \in D$, and 0 otherwise.\\
	$w_q^d$ & variable with value 1 if qubit $q \in Q$ is a control qubit of gate $d \in D$, and 0 otherwise.\\
	\midrule
	\multicolumn{2}{l}{\textbf{Quantum Cost: \labelcref{eq:qrcsp:objective}, \labelcref{eq:qrcsp:def_sum_control1}-\labelcref{eq:qrcsp:def_sum_control2}, \labelcref{eq:qrcsp:design_vars}}} \\[.05cm]
	$f(c)$ & quantum cost of a single MCT gate with $c\ge 0$ control qubits.\\
	$y_j^d$ & variable with value 1 if gate $d \in D$ consists of a total of $j \in Q$ target and control qubits, zero otherwise.\\
	\midrule
	\multicolumn{2}{l}{\textbf{Quantum States and Flow Commodities: 
    \labelcref{eq:qrcsp:flow_balance}-\labelcref{eq:qrcsp:keep_implication}, \labelcref{eq:qrcsp:flow_vars}}} \\[.05cm]
	$\Omega$ & $=\{0_{(2)}, \hdots, (2^n-1)_{(2)}\}$ set of pure quantum states.\\
	$Q^0_\sigma$ & $= \{q \in Q : \sigma_q = 0\}$ set of qubits that are zero in state $\sigma \in \Omega$.\\
	$K$ & set of indices of the flow commodities; each commodity represents a set of input quantum states that have the same (possibly incomplete) output specification.\\
	$\Omega^\text{in}_k$ & $\subseteq \Omega$ set of input quantum states that represent commodity $k \in K$; together the sets $\Omega^\text{in}_k$ $\forall k\in K$ provide a partition of $\Omega$.\\
	$\Omega^\text{out}_k$ & $\subseteq \Omega$ set of quantum states that meet the (possibly incomplete) output specification associated with commodity $k \in K$; the sets $\Omega^\text{out}_k$ may overlap, and together cover $\Omega$.\\
	\midrule
	\multicolumn{2}{l}{\textbf{Flow Networks: \labelcref{eq:qrcsp:flow_balance}-\labelcref{eq:qrcsp:keep_implication}, \labelcref{eq:qrcsp:flow_vars}}} \\[.05cm]
	$V$ & set of vertices in each flow network; consists of source $S$, sink $T$, and nodes $(\sigma, d)$ $\forall \sigma \in \Omega, d \in D \cup \{m+1\}$.\\
	$A_k$ & set of arcs in the flow network of commodity $k\in K$.\\
	$A^\text{flip}_k$ & $\subset A_k$ set of arcs for commodity $k\in K$ that represent a transition that flips a qubit.\\
	$A^\text{keep}_k$ & $\subset A_k$ set of arcs for $k\in K$ that represent a transition that keeps the state the same.\\
	$x_a^k$ & variable with value 1 if commodity $k \in K$ uses arc $a \in A_k$, and 0 otherwise.\\
	$\delta^+_k(v)$ & $\subseteq A_k$ set of arcs for $k\in K$ coming out of vertex $v \in V$.\\
	$\delta^-_k(v)$ & $\subseteq A_k$ set of arcs for $k\in K$ coming into vertex $v \in V$.\\
	$d(a)$ & $\in D$ shorthand for the gate associated with arc $a \in A^\text{flip}_k \cup A^\text{keep}_k$.\\
	$q(a)$ & $\in Q$ shorthand for the qubit that is flipped by arc $a \in A^\text{flip}_k$.\\
	$\sigma(a)$& $\in \Omega$ shorthand for the state that arc $a \in A^\text{flip}_k \cup A^\text{keep}_k$ transitions from.\\
	\bottomrule
\end{tabular}
\caption{Nomenclature}
\label{tab:symbols}
\end{table}

\bmhead{Circuit Design}
The design of a quantum circuit should suffice for the necessary rules associated with the gate configuration of the selected gate library. 
This model imposes this relation through variables and constraints that describe quantum circuit diagrams presented in Example~\ref{fig:example1c} or Example~\ref{fig:example2c}.
The set of qubits $Q=\{1,\hdots,n\}$ describes the rows, while the set of gates $D=\{1, \hdots, m\}$ describes the columns.
Note that the maximum number of gates $m$ is an input to the problem.
Variables $t_q^d$ and $w_q^d$ are binary variables in Model~\eqref{form:qrcsp} that indicate whether $(q,d)$, contains a target or control qubit, respectively, for any $q\in Q$, $d\in D$.
The associated binary variables are defined in Equation~\eqref{eq:qrcsp:design_vars}.
Constraints~\eqref{eq:qrcsp:target_or_control} state that each spot defined by a combination of qubit $q$ and gate $d$, be a target qubit ($\oplus$), a control qubit ($\bullet$), or neither; but not both.
Constraints~\eqref{eq:qrcsp:one_target} enforce at most one target qubit per gate.
Gates without a target qubit are forced to be empty through Constraints~\eqref{eq:qrcsp:control_needs_target}.

\bmhead{Quantum Cost}
An MCT gate with $c \ge 0$ control qubits incurs a quantum cost of $f(c)$, as defined in Table~\ref{tab:quantum_cost}.
Let $y_j^d$ be a binary indicator that takes value one if gate $d \in D$ contains exactly $j \in Q$ target and control qubits, or zero otherwise, as defined in Equation~\eqref{eq:qrcsp:design_vars}.
Objective~\eqref{eq:qrcsp:objective} then calculates the total quantum cost over all gates.
Note that the input $q-1$ subtracts the single target qubit, as $f(\cdot)$ is defined in terms of control qubits only.
When gate $d\in D$ is empty, all indicators $y_j^d$ are zero (note $j \ge 1$) and there is no contribution to the objective.
The $y$-variables are forced to take the correct values by Constraints~\eqref{eq:qrcsp:def_sum_control1} and \eqref{eq:qrcsp:def_sum_control2}.
The former ensures that the indicators together represent the total number of target and control qubits, and the latter ensures that at most one indicator is active.

\bmhead{Flow Networks}
The state transitions for commodity $k\in K$ are represented by a network flow in $G_k$.
Equation~\eqref{eq:qrcsp:flow_vars} defines flow variables $x_a^k$ that take the value one if arc $a\in A_k$ is used, and zero otherwise.
The flow goes from the source to the sink of size $\lvert \Omega^\text{in}_k \rvert$. This flow is distributed to all the input states $\Omega^\text{in}_k$ by the source arcs, after which the flip and keep arcs model the state transitions.
The only way to reach the sink node is through the sink arcs that start from one of the permitted output states $\Omega^\text{out}_k$.
Equation~\eqref{eq:qrcsp:flow_balance} enforces that the $x_a^k$ variables represent such a flow.
Here $\delta^+_k(v)$ and $\delta^-_k(v)$ denote the out-arcs and in-arcs of vertex $v \in V$, respectively.

It remains to connect the circuit design decisions to the eligible network flows, i.e., ensure that arcs can only be used if they match the given circuit specification.
For convenience of presentation, the following shorthands are used for properties of flip and keep arcs $a \in A^\text{flip}_k \cup A^\text{keep}_k$: $d(a) \in D$ is the gate associated with arc $a$, $q(a) \in Q$ is the qubit that is flipped by arc $a$ (flip arcs only), and $\sigma(a) \in \Omega$ is the state that the arc $a$ transitions from.
Furthermore, for a given state $\sigma \in \Omega$ it will be convenient to define $Q^0_{\sigma}$ as the set of qubits that are in state 0, e.g., $Q^0_{010} = \{1,3\}$.
Constraints~\eqref{eq:qrcsp:flip_implication} and \eqref{eq:qrcsp:keep_implication} eliminate the flow from all flip and keep arcs that do not match the given circuit specification.
This can be seen by considering the outgoing arcs of an arbitrary vertex $(\sigma, d)$, $\sigma \in \Omega$, $d \in D$.
That is, the set of one keep arc and $n$ flip arcs that model the state transition due to gate $d$.
\begin{itemize}
	\item Case 1: \emph{Gate $d$ flips some qubit $\bar{q} \in Q$.} Based on the transition rule (Section~\ref{sec:terminology}), this means that $\bar{q}$ is the target qubit $(t_{\bar{q}}^d = 1)$ and all controls are on qubits with value 1 in state $\sigma$. Or alternatively, none of the controls are on qubits with value 0 in state $\sigma$ ($w_{q^0}^d = 0$ $\forall q^0 \in Q^0_{\sigma}$). It follows that the antecedent of \eqref{eq:qrcsp:keep_implication} holds and that the keep arc is excluded as expected. Flip arcs $a$ are eliminated by \eqref{eq:qrcsp:flip_implication} as soon as they flip the wrong qubit, i.e., $q(a) \neq \bar{q}$, which implies $t_{q(a)}^d = 0$. The arc that flips $\bar{q}$ is the only flip arc that is \emph{not} excluded by \eqref{eq:qrcsp:flip_implication}, as the target is in the right place ($t^d_{q(a)} = 1$) and all controls are on the zero states ($w_{q^0}^d = 0$).
	\item Case 2: \emph{Gate $d$ keeps state $\sigma$ the same.} Either there is no target qubit, in which case all flip arcs have $t_{q(a)}^d = 0$ and get eliminated by \eqref{eq:qrcsp:flip_implication} while the keep arc is unaffected by \eqref{eq:qrcsp:keep_implication}. Or there is a target qubit $\bar{q}$, but at least one of the controls is on a zero state, i.e., $w_{q^0}^d = 1$ for some $q^0 \in Q^0_\sigma$. It follows again that all flip arcs are eliminated while the keep arc is unaffected.
\end{itemize}

\noindent It is concluded that equations \eqref{eq:qrcsp:flip_implication}-\eqref{eq:qrcsp:keep_implication} close the correct arcs to match the circuit design.
It should also be noted that the flow variables may be relaxed to the continuous domain $x_a^k \in [0,1]$ $\forall k \in K, a \in A_k$.
This follows from the fact that for fixed values of the $t$, $w$, and $y$-variables, the remaining problem decomposes into $\lvert K \rvert$ independent minimum-cost flow problems, which are known to have the integrality property \cite{AhujaEtAl1993-NetworkFlowsTheory}.
This means that the new model requires only $\mathcal{O}(nm)$ binary variables.

\subsection{Implementation of Logical Constraints}

There are multiple ways to implement Constraints~\eqref{eq:qrcsp:flip_implication}-\eqref{eq:qrcsp:keep_implication}, depending on the solver.
This paper reformulates the implications as linear constraints that are widely supported:
\begin{subequations}
\begin{alignat}{2}
	x_a^k & \le t^{d(a)}_{q(a)} & \quad & \forall k \in K, a \in A^\text{flip}_k, \label{eq:linearized-logical-constr-1}\\
	x_a^k & \le 1-w^{d(a)}_{q^0} && \forall k \in K, a \in A^\text{flip}_k, q^0 \in Q^0_{\sigma(a)}, \label{eq:linearized-logical-constr-2}\\
	x_a^k & \le 1 - \sum_{q \in Q} t_q^{d(a)} + \sum_{q^0 \in Q^0_{\sigma(a)}} w^{d(a)}_{q^0} && \forall k \in K, a \in A^\text{keep}_k. \label{eq:linearized-logical-constr-3}
\end{alignat}
\label{eqs:linearized}
\end{subequations}

\noindent These specific constraints are justified by Proposition~\ref{prop1} below.
The reader is referred to \cite{Williams2013-ModelBuildingMathematical} for more general techniques to convert logical constraints.\\

\begin{proposition}
\label{prop1}
In Model~\eqref{form:qrcsp}, the logical constraints \eqref{eq:qrcsp:flip_implication}-\eqref{eq:qrcsp:keep_implication} may be implemented through linear constraints \eqref{eq:linearized-logical-constr-1}-\eqref{eq:linearized-logical-constr-3}.
\end{proposition}

\begin{proof}
Given a specific flow commodity $k \in K$, the logical constraints apply to two mutually exclusive cases: $a \in A_k^\text{flip}$ is a flip arc or $a \in A_k^\text{keep}$ is a keep arc. \\

\noindent \emph{Case 1 (Flip Arc):}
Arc $a \in A_k^\text{flip}$ is a flip arc, so implication \eqref{eq:qrcsp:flip_implication} applies.
If the antecedent holds, then it must be that $w_{q^0}^{d(a)} = 1$ for some $q^0 \in Q_{\sigma(a)}^0$ or that $t_{q(a)}^{d(a)} = 0$.
In this case the consequent is that $x_a^k = 0$.
Indeed, this is enforced by the linear constraints: Constraint~\eqref{eq:linearized-logical-constr-1} forces $x_a^k$ to zero if $t_{q(a)}^{d(a)} = 0$ and Constraints~\eqref{eq:linearized-logical-constr-2} do the same if any of the $w_{q^0}^{d(a)}$ are equal to one.
If the antecedent does not hold, then it must be that $w_{q^0}^{d(a)} = 0$ for all $q^0 \in Q_{\sigma(a)}^0$ and $t_{q(a)}^{d(a)} = 1$.
In this case, $x_a^k$ must not be constrained, and indeed, Constraints~\eqref{eq:linearized-logical-constr-1}-\eqref{eq:linearized-logical-constr-2} admit both $x_a^k = 0$ and $x_a^k = 1$.\\

\noindent \emph{Case 2 (Keep Arc):}
Arc $a \in A^\text{keep}_k$ is a keep arc, so implication \eqref{eq:qrcsp:keep_implication} applies.
If the antecedent holds, then $\bigwedge_{q^0 \in Q^0_{\sigma(a)}} \left(w_{q^0}^{d(a)} = 0\right)$ directly implies $\sum_{q^0 \in Q^0_{\sigma(a)}} w^{d(a)}_{q^0}=0$.
Furthermore, it must be that $\sum_{q \in Q} t_q^{d(a)} = 1$.
In this case, $x_a^k$ should be forced to zero.
Constraint~\eqref{eq:linearized-logical-constr-3} correctly implements this behavior by forcing $x_a^k \le 0$.
If the antecedent does not hold, then $w_{q^0}^{d(a)} = 1$ for some $q^0 \in Q_{\sigma(a)}^0$, which implies $\sum_{q^0 \in Q^0_{\sigma(a)}} w^{d(a)}_{q^0} \ge 1$, or  $\sum_{q \in Q} t_{q(a)}^{d(a)} = 0$.
In this case, $x_a^k$ should be not restricted.
If  $\sum_{q \in Q} t_{q(a)}^{d(a)} = 0$, then the right-hand side of Constraint \eqref{eq:qrcsp:one_target} is at least one and $x_a^k$ is indeed not restricted.
If $\sum_{q \in Q} t_{q(a)}^{d(a)} \ge 1$, Constraint~\eqref{eq:qrcsp:one_target} (at most one target bit) implies that the sum is exactly one.
At the same time, $\sum_{q^0 \in Q^0_{\sigma(a)}} w^{d(a)}_{q^0} \ge 1$, such that again the right-hand side is not restrictive.
It is concluded that both $x_a^k = 0$ and $x_a^k=1$ are admitted, as expected, which concludes the proof.

\end{proof}

\subsection{Comparison to the Previous Study}
The new optimization model differs from \cite{JungChoi2021-MultiCommodityNetwork} and \cite{jung2025new} in a number of significant ways.
First, \cite{JungChoi2021-MultiCommodityNetwork} separately treats four cases for each state transition: empty gate (no flip), zero control qubits (flip), one or more control qubits with flip, one or more control qubits without flip.
The new model captures all these cases in the same framework, thereby significantly simplifying the formulation and enhancing the computational efficiency.

Another important difference is in how the circuit is connected to opening and closing the flow arcs.
\cite{JungChoi2021-MultiCommodityNetwork} define a binary variable for each state $\sigma \in \Omega$ and gate $d\in D$ that identifies whether this state is modified by the gate, which introduces $\mathcal{O}(2^nm)$ binary variables.
Similar variables are used by \citep{jung2025new} as well.
The new model provides a much more direct way to close arcs through Constraints~\eqref{eq:qrcsp:flip_implication} and \eqref{eq:qrcsp:keep_implication}.
Compared to \cite{JungChoi2021-MultiCommodityNetwork}, this multiplies the number of constraints by a factor of $\mathcal{O}(n^2)$, \emph{but no additional binary variables are necessary.}
As a result, the new model requires only $\mathcal{O}(nm)$ binary variables.
The number of flow variables remains at $\mathcal{O}(2^n n m \lvert K \rvert)$.
However, as mentioned before, the new model decomposes into smaller independent minimum-cost flow problems for a fixed design.
This implies that the formulation has a block-angular structure that may be exploited by decomposition methods in future work.

\section{Symmetry-Breaking Constraints}
\label{sec:symmetry-breaking-constraints}
It has been observed previously that the optimal quantum circuit design is not necessarily unique.
The paper \cite{JungChoi2021-MultiCommodityNetwork} observes that empty gates do not affect the overall circuit, and constraints are added to force empty gates to appear at the end.
Reference \cite{IwamaEtAl2002-TransformationRulesDesigning} presents multiple transformations that lead to new circuits with equivalent outputs (but not necessarily the same quantum cost).
Inspired by these observations, this section defines three different swap operations -- \emph{Swap 1}, \emph{Swap 2}, and \emph{Swap 3} -- that result in a different but equivalent circuit with the same cost.

It will be proven that repeatedly applying these operations eventually results in a circuit that is \emph{unswappable}, i.e., no further swaps of these types can be applied.
This makes it possible to introduce symmetry-breaking constraints that limit the search to unswappable circuits, without loss of optimality.
After all, every swappable quantum circuit is associated with an unswappable quantum circuit that has the same cost.
The swaps used in this paper are introduced below, and future works may expand the list to eliminate additional symmetries.

\begin{figure}[t]
	\renewcommand\figurename{Example}
	\centering
	\begin{subfigure}{0.27\textwidth}
		\centering
		\begin{quantikz}[row sep=0.3cm]
			\lstick{\scriptsize $q=1$}	& \ctrl{3}		&			&	\\
			\lstick{\scriptsize $q=2$}	&				& \targ{}	&	\\
			\lstick{\scriptsize $q=3$}	& \control{}	& \ctrl{-1}	&	\\
			\lstick{\scriptsize $q=4$}	& \targ{}		&			&	
		\end{quantikz}
		\caption{Before Swap 2.}
		\label{fig:sym:dif:1}
	\end{subfigure}%
	\begin{subfigure}{0.22\textwidth}
		\centering
		\begin{quantikz}[row sep=0.3cm]
			&			& \ctrl{3}		&	\\
			& \targ{}	&				& 	\\
			& \ctrl{-1}	& \control{}	&	\\
			&			& \targ{}		&
		\end{quantikz}
		\caption{After Swap 2.}
		\label{fig:sym:dif:2}
	\end{subfigure}
	\begin{subfigure}{0.24\textwidth}
		\centering
		\begin{quantikz}[row sep=0.36cm]
			& \ctrl{2}	& 				&	\\
			& 			& \ctrl{2}		& 	\\
			& \targ{}	& \targ{}		&	\\
			&			& \control{}	&
		\end{quantikz}
		\caption{Before Swap 3.}
		\label{fig:sym:same:1}
	\end{subfigure}
	\begin{subfigure}{0.24\textwidth}
		\centering
		\begin{quantikz}[row sep=0.36cm]
			&				& \ctrl{2}		&	\\
			& \ctrl{2}		&				& 	\\
			& \targ{}		& \targ{}		&	\\
			& \control{}	& 				&
		\end{quantikz}
		\caption{After Swap 3.}
		\label{fig:sym:same:2}
	\end{subfigure}
	\caption{Swap Operations.}
	\label{ex:swap}
\end{figure}

\begin{itemize}
	\item \emph{Swap 1: Empty Gate.}
	If gate $d \in D$ is empty ($\sum_{q\in Q} t_q^d = 0$) and gate $d+1$ is not empty ($\sum_{q\in Q} t_q^{d+1} = 1$), then swap the two gates.

	\item \emph{Swap 2: Different Target.}
	If the target qubit $q \in Q$ of gate $d \in D$ is at a higher line than the target qubit $r\in Q$ of gate $d+1$ ($q > r$, lower in the diagram), and the target qubits do not neighbor a control qubit ($w_q^{d+1} = 0$ and $w_r^d = 0$), then swap the two gates.
	It was observed by \cite{IwamaEtAl2002-TransformationRulesDesigning} that the target qubits do not affect the control qubits in either direction, and hence the gates can safely be swapped.
	Example~\ref{fig:sym:dif:1}-\ref{fig:sym:dif:2} provides a visualization.

	\item \emph{Swap 3: Same Target.}
	If gate $d\in D$ and gate $d+1$ have the same target qubit $q \in Q$ ($t_q^d = 1$ and $t_q^{d+1} = 1$), and gate $d$ has fewer control bits ($\sum_{r \in Q} w_r^d < \sum_{r \in Q} w_r^{d+1}$), then swap the two gates.
	Again, the target qubits do not affect the neighboring control qubits, which justifies the swap.
	A visualization is provided by Example~\ref{fig:sym:same:1}-\ref{fig:sym:same:2}.
\end{itemize}

\begin{proposition}
	\label{prop:unswappable}
	Any swappable circuit can be turned into an unswappable circuit by repeatedly applying Swap 1-3.
\end{proposition}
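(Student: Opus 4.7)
The plan is to prove termination by exhibiting a nonnegative integer potential $\Phi$ on circuits that strictly decreases under every application of Swap 1-3. Since any sequence of swaps then has bounded length, the process must reach a circuit on which none of the three swaps applies, i.e., an unswappable circuit.

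I would define a strict order $\prec$ on the possible contents of a gate slot that reflects the ``sorted'' form the three swap rules target: (i) any non-empty gate $\prec$ any empty gate, mirroring Swap 1; (ii) among non-empty gates with distinct target lines, the gate with the smaller target index $\prec$ the gate with the larger one, mirroring Swap 2; (iii) among non-empty gates with the same target qubit, the gate with strictly more control qubits $\prec$ the gate with strictly fewer, mirroring Swap 3. Two gates that agree on all three attributes are incomparable (equal under the induced preorder). Then let
\[
\Phi \;=\; \#\bigl\{(i,j) \,:\, i < j,\ \text{gate at position } j \,\prec\, \text{gate at position } i\bigr\},
\]
the number of inversions of $\prec$ in the gate sequence. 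By construction $\Phi$ is a nonnegative integer bounded above by $\binom{m}{2}$.

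The key observation is that swapping the gates at positions $d$ and $d{+}1$ does not change the positions of any other gate, so for every pair $(i,j)$ with $\{i,j\}\neq\{d,d{+}1\}$ the relative order is preserved and its contribution to $\Phi$ is unchanged. Therefore $\Phi$ changes only through the pair $(d, d{+}1)$ itself. A case check then shows that each of Swap 1-3 turns an inverted pair into a non-inverted one: Swap 1 has empty at $d$, non-empty at $d{+}1$, which is an inversion by (i); Swap 2 has targets $q > r$ at positions $d, d{+}1$, an inversion by (ii); Swap 3 has the same target but fewer controls at $d$ than at $d{+}1$, an inversion by (iii). After the swap, the pair $(d,d{+}1)$ is no longer an inversion in any of the three cases, so $\Phi$ decreases by exactly one.

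The bulk of the proof is thus choosing $\prec$ so that the three rules all correspond to the same notion of inversion; once this is fixed, the rest is a three-line case analysis. The one point I would take care to verify explicitly is that Swap 2's side condition $w_q^{d+1}=w_r^d=0$ does not affect the argument: it only restricts when Swap 2 may fire, but whenever it does fire the pair $(d,d{+}1)$ is an inversion of $\prec$, and that is all that is needed for $\Phi$ to drop. Hence the termination argument goes through and every swappable circuit is reduced to an unswappable one by finitely many swaps.
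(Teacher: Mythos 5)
Your proof is correct, but it takes a genuinely different route from the paper's. The paper proves \emph{weak} normalization by scheduling the swaps: it first exhausts Swap 1 (pushing empty gates to the end, where Swaps 2--3 cannot disturb them), then alternates Swaps 2 and 3, arguing that each Swap 2 strictly decreases the target vector $\tau$ in lexicographic order while Swap 3 leaves $\tau$ fixed and merely sorts same-target blocks by control count. That argument only shows the swaps \emph{can} be applied in a terminating order, and it has to track which swaps may become re-enabled after others (e.g.\ a Swap 3 can re-enable a Swap 2 by relocating control qubits, which is harmless only because $\tau$ is unaffected). You instead prove \emph{strong} normalization: your inversion count $\Phi$ with respect to the order $\prec$ (non-empty before empty, then by target index, then by descending control count) decreases by exactly one under \emph{every} swap, because an adjacent transposition changes only the status of the swapped pair and each of Swaps 1--3 fires precisely on a $\prec$-inversion of the attributes it inspects. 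Your case analysis is sound, the extra firing conditions (such as $w_q^{d+1}=w_r^d=0$ for Swap 2) indeed only restrict applicability without affecting the monovariant, and the mutual exclusivity of the three clauses of $\prec$ makes the inversion count well defined. What your approach buys is a uniform argument, an explicit bound of $\binom{m}{2}$ on the length of any swap sequence, and termination independent of the order in which swaps are applied; what the paper's buys is a more concrete picture of what the resulting unswappable circuit looks like (empty gates last, targets sorted, same-target blocks sorted by control count). Either suffices for the proposition.
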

\begin{proof}
	The number of permutations of the gates is finite, so it is sufficient to prove that the swaps can be applied in a way that avoids cycling.
	First repeatedly apply Swap 1, which results in all empty gates moving to the end of the circuit.
	These empty gates are not affected by Swap 2 and 3, and hence will stay in place and Swap 1 will not be applicable again.
	Let $\tau$ be a vector of length $m$ that contains the target qubit of each gate, or zero otherwise.
	Every Swap 2 strictly decreases $\tau$ in lexicographical order, which avoids cycling.
	For example, the swap in Example~\ref{fig:sym:dif:1}-\ref{fig:sym:dif:2} changes $\tau$ from $(4,2)$ to $(2,4)$.
	Whenever no Swaps 2 can be made, repeatedly applying Swap 3 has the effect of sorting groups with the same target qubit by the number of control qubits, and does not introduce cycles.
	Swap 3 does not affect the vector $\tau$, so no cycles are introduced when returning to Swap 2 after Swap 3 is exhausted.
	Eventually, an unswappable circuit is obtained after finitely many steps.
\end{proof}

\bmhead{Constraints}
Proposition~\ref{prop:unswappable} justifies symmetry-breaking constraints that enforce that the circuit is unswappable.
The three swaps are translated into the following three classes of inequalities:
\begin{subequations}
\begin{alignat}{2}
	\sum_{q \in Q} t_q^d & \ge \sum_{q \in Q} t_q^{d+1} & \quad & \forall d \in D, \label{eq:sym:empty_gate}\\
	t_q^d + t_r^{d+1} & \le 1 +  w_q^{d+1} + w_r^d & & \forall d \in D, q, r \in Q, q > r, \label{eq:sym:different_target}\\
	\sum_{r \in Q} w_r^d - \sum_{r \in Q} w_r^{d+1} &\ge (n-1)(t_q^d + t_q^{d+1} - 2) & & \forall d \in D, q \in Q. \label{eq:sym:same_target}
\end{alignat}
\end{subequations}
Constraints~\eqref{eq:sym:empty_gate} prevent Swap 1 by forcing gate $d \in D$ to be non-empty when gate $d+1$ is non-empty.
A similar constraint is used in \cite{JungChoi2021-MultiCommodityNetwork}.
Constraints~\eqref{eq:sym:different_target} model that if the target bits are set up correctly for Swap 2 (left-hand side of the equation equals two), then $w_q^{d+1} = 1$ or $w_r^d = 1$.
This is necessary because $w_q^{d+1} = w_r^d = 0$ would allow Swap 2 to be applied.
The constraint is automatically satisfied when the left-hand side is less than two.
Last, consider Constraints~\eqref{eq:sym:same_target}.
If the gates have targets on the same qubit ($t_q^d = t_q^{d+1} = 1$) then the constraint reduces to $\sum_{r \in Q} w_r^d \ge \sum_{r \in Q} w_r^{d+1}$ to prevent Swap 3. The term $n-1$ (the maximum number of control bits per gate) is sufficiently large to make the constraint inactive.

\section{Computational Experiments}
\label{sec:computational-experiments}
Computational experiments on well-known benchmarks are presented to demonstrate the performance of the new optimization model.
The new model and the symmetry-breaking constraints are implemented in Python 3.11 and run on a Linux machine with dual Intel Xeon Gold 6226 CPUs (24 cores in total) on the PACE Phoenix cluster \cite{PACE2017-PartnershipAdvancedComputing}.
CP-SAT 9.8.3296 \cite{ortools} is used as the CP solver with 24 workers (threads), and Gurobi 11.0.0 is used for the MIP approach.
The instances are sourced from RevLib \cite{WGT+:2008}, a common benchmark for reversible and quantum circuit design.
This paper selects Boolean functions with up to seven qubits that have known circuit implementations in fewer than 100 gates.
After removing easy three-qubit functions, a benchmark suite of 49 functions remains.
A time limit of 3600 seconds per instance is imposed in all experiments.
The circuit design problem is considered to be \emph{solved} if an optimal circuit implementation is found and proven to be optimal, or if it is proven that the problem is infeasible.
If no circuit is found, or if optimality cannot be proven within the time limit, then the time limit is reported as the runtime.

\subsection{Performance New Optimization Model}
This section compares the performance of the new optimization model to the optimization model of \cite{JungChoi2021-MultiCommodityNetwork}.
To the best of our knowledge, \cite{JungChoi2021-MultiCommodityNetwork} is the first MIP approach to the MCT quantum circuit design problem and therefore is closest to the current work.
The results are compared for the 38 reversible functions that overlap with the benchmark suite used in this paper, and experiments are carried out for the $m \in \{ 6, 7, 8 \}$ number of gates.

Table~\ref{tab:new_model_performance} demonstrates that the new optimization model systematically outperforms previous work.
Even accounting for the difference in hardware (6 cores vs. 24 cores), the new model is an order of magnitude faster when solved with Gurobi.
When solved with CP-SAT, the new runtimes even improve by another magnitude.
For $m=6$ example, CP is over 500x times faster than the runtime reported in \cite{JungChoi2021-MultiCommodityNetwork}.
It can also be seen that the new model solves significantly more instances.
All benchmarks with $m=7$ gates can now be solved, where only 20 out of 38 were solved previously.
Further inspection of the results reveals that every instance that is solved by \cite{JungChoi2021-MultiCommodityNetwork} is solved by the new model as well, regardless of whether MIP or CP was used.
When CP fails to solve the problem (which only happens for $m=8$), the best feasible solution is never worse than the best feasible solution obtained by \cite{JungChoi2021-MultiCommodityNetwork}.
The best performance is clearly obtained by the new model with CP, and this is the setting that is used for the remainder of the experiments.

\begin{table}[!t]
\centering
	\begin{tabular}{lrrrrrrr}
		\toprule
		& \multicolumn{4}{c}{Average Runtime (s)}
        & \multicolumn{3}{c}{Solved Instances}  \\
		\cmidrule(lr){2-5} \cmidrule(lr){6-8}
		& $m=6$ & $m=7$ & $m=8$ & Limit
        & $m=6$ & $m=7$ & $m=8$\\
		\midrule
		\cite{JungChoi2021-MultiCommodityNetwork} (MIP)
        & 6,614 & 21,126 & 29,895 & 36,000 
        & 36/38 & 20/38 & 7/38 \\
		New Model (MIP) 
        & 160 & 1252 & 2541 & 3,600 
        & 38/38 & 38/38 & 15/38 \\
		New Model (CP) 
        & 12 & 115 & 1193 & 3,600 
        & 38/38 & 38/38 & 28/38 \\
		\bottomrule
	\end{tabular}
	\vspace{0.5\baselineskip}
	\caption{Performance New Optimization Model compared to \cite{JungChoi2021-MultiCommodityNetwork}.}
	\label{tab:new_model_performance}
\end{table}

\subsection{Larger Instances}
The improved performance of the new optimization model motivates experiments on the full benchmark suite for $m=6$ up to $m=15$ gates, the larger instances that have not been solved in the previous MIP-based study \cite{JungChoi2021-MultiCommodityNetwork}.
This includes reversible functions such as \texttt{rd53} and \texttt{decod24-enable} that were not previously considered, and the number of gates that far exceeds the $m=8$ gates in the experiments by \cite{JungChoi2021-MultiCommodityNetwork}.

\begin{table}[!t]
\centering
	\begin{tabular}{lrrrrr}
		\toprule
		 & $m=6$ & $m=7$ & $m=8$ & $m=9$ & $m=10$ \\
		\cmidrule{2-6}
		Average Runtime (s) &    14 &   111 &  1,101 &  2,140 & 2,502 \\
		Solved Instances & 49/49 & 49/49 & 37/49 & 23/49 & 18/49 \\
        \midrule
        & $m=11$ & $m=12$ & $m=13$ & $m=14$ & $m=15$ \\
        \cmidrule{2-6}
		Average Runtime (s) & 2,754 & 2,757 & 2,753 & 2,761 & 2,758 \\
		Solved Instances & 13/49 & 13/49 & 13/49 & 13/49 & 13/49 \\
		\bottomrule
	\end{tabular}
	\vspace{0.5\baselineskip}
	\caption{Performance New Optimization Model with CP on Large Instances.}
	\label{tab:larger_instances}
	\vspace{-0.5\baselineskip}
\end{table}

\begin{figure}[t]
	\includegraphics[width=\textwidth]{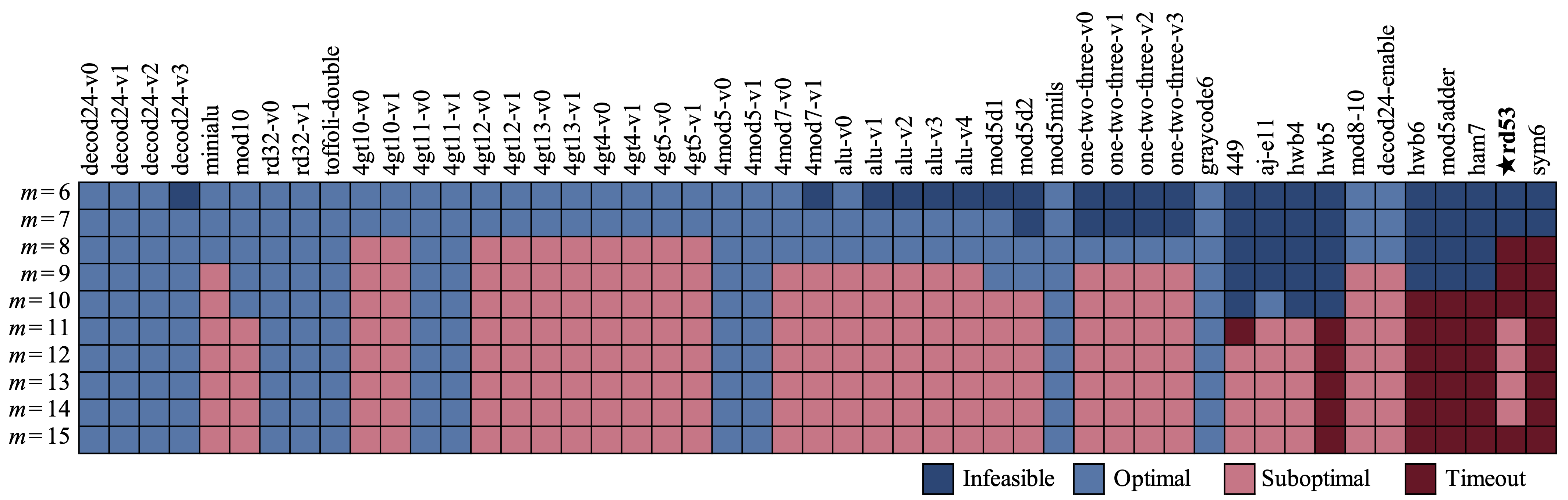}
	\vspace{-\baselineskip}
	\caption{Solution Status of New Model with CP at 1-Hour Time Limit.}
	\label{fig:instance_compare}
\end{figure}

Table~\ref{tab:larger_instances} shows that a big step was made in handling larger instances.
All instances with up to $m=7$ gates can now be solved in a matter of minutes on average, including the more complicated circuits.
While most instances with $m=8$ gates can still be solved within one hour, the average runtime starts to rise sharply at this point.
It is clear that more work remains to be done to solve the largest instances, although 13 out of the 49 instances with $m=15$ gates can already be solved.

Figure~\ref{fig:instance_compare} details the final solution status (infeasible, optimal, suboptimal, or timeout) for each of the large instances.
The newly added reversible functions are \texttt{449} up to \texttt{sym6} on the right side of the figure.
It is interesting to observe that these functions require a relatively large number of gates.
For example, \texttt{rd53} (marked with a $\star$) is proven infeasible for $m=6$ and $m=7$, and the solver was unable to find a feasible solution for $m \in \{8,9,10\}$, which are presumably infeasible.
At $m=11$, however, the solver fails to prove optimality but does find a feasible circuit.
This circuit, shown in Figure~\ref{fig:rd53-new}, improves the state of the art with a quantum cost of 47, using only seven qubits.
This is a cost improvement of 28\% compared to the seven-qubit circuit presented in \cite{WGT+:2008}, for example, which has a quantum cost of 65.
This demonstrates the benefit of using a model that can handle a larger number of gates, as good results may be obtained even when optimality cannot be proven.

\begin{figure}[t]
\centering
\begin{tikzpicture}

  \node[anchor=north west, inner sep=0] (A) at (0,0) {
    \begin{subfigure}{0.48\textwidth}
      \centering
      \adjustbox{max width=\linewidth}{
        \begin{quantikz}[row sep=0.36cm]
          &         &\targ{}	 & 	        &\ctrl{5} &\targ{}   &\ctrl{4} &\ctrl{3} &\targ{}   &\ctrl{5} &        &\ctrl{6} 
          &\ctrl{6} &\targ{}   &\targ{}   &\ctrl{4} & \\
          &\ctrl{4} &\ctrl{-1} &\targ{}   &\ctrl{4} &          &         &         &          &         &\targ{} &\ctrl{5} 
          &\ctrl{5} &          &          &         & \\
          &         &          &\ctrl{-1} &         &\ctrl{-2} &         &         &          &         &        &\ctrl{4}         
          &         &          &          &         & \\
          &      	  &          &	        &         &          &         &\targ{}  &\ctrl{-3} &         &        &         
          &\ctrl{3} &\ctrl{-3} &\ctrl{-3} &         & \\
          &         &          &          &         &          &\targ{}  &         &\ctrl{-4} &         &        &         
          &\ctrl{2} &\ctrl{-4} &          &\targ{}  & \\
          & \targ{} &	         &          &\targ{}  &          &         &         &          &\targ{}  &        &         
          &         &          &          &         & \\ 
          &         & 	     &	        &         &          &         &         &          &         &        &\targ{}          
          &\targ{}  &          &          &         & \\
        \end{quantikz}
      }
      \caption{Previous circuit (quantum cost: 65) \cite{WGT+:2008}.}
      \label{fig:rd53-old}
    \end{subfigure}
  };

  \node[anchor=north west, inner sep=0] (B) at ([xshift=1.2cm]A.north east) {
    \begin{subfigure}{0.48\textwidth}
      \centering
      \adjustbox{max width=\linewidth}{
        \begin{quantikz}[row sep=0.22cm]
          & \ctrl{5}  &	\targ{}	 & 	         & \ctrl{5}   &            &            &             &           &             &             &          &  \\
          &		    &            &	         &            &            & \ctrl{5}   & \ctrl{4}    & \targ{}   &  \ctrl{5}   & \ctrl{4}    & \ctrl{3} &  \\
          &           &            &	\targ{}  & \control{} &  \targ{}   & \control{} & \control{}  & \ctrl{-1} &             &             &          &  \\
          &      		& \ctrl{-3}  &	         &            &  \ctrl{-1} &            &             &           &             &             &          &  \\
          &        	&            &           &         	  &            &            &             &           &  \control{} & \control{}  & \targ{}  &  \\
          & \targ{}   &	         & \ctrl{-3} & \targ{}	  &            & \control{} & \targ{}     &           &  \control{} & \targ{}     &          &  \\
          &       	& 	         &	         &            &            &   \targ{}  &             &           &  \targ{}    &             &          &  \\
        \end{quantikz}
      }
      \caption{New Circuit ($m = 11$, quantum cost: 47).}
      \label{fig:rd53-new}
    \end{subfigure}
  };

\draw[thin, black]
  ($(A.north east)!0.5!(B.north west)$) -- 
  ($($(A.south east)!0.5!(B.south west)$)+(0,5pt)$);

\end{tikzpicture}
\caption{Previous best known circuit and new circuit for \texttt{rd53}.}
\label{fig:rd53}
\end{figure}

\begin{figure}[t]
\centering
\begin{tikzpicture}

  \node[anchor=north west, inner sep=0] (A) at (0,0) {
    \begin{subfigure}{0.48\textwidth}
      \centering
      \adjustbox{max width=\linewidth}{
        \begin{quantikz}[row sep=0.2cm]
        & \targ{}   &          & \targ{}   & \ctrl{4} & \ctrl{2}  & \ctrl{3} &  \\
        & \ctrl{-1} & \ctrl{3} &           &          &           &          &  \\
        &           &          & \ctrl{-2} & \ctrl{2} & \targ{}   &          &  \\
        &           &          & \ctrl{-3} & \ctrl{1} & \ctrl{-1} & \targ{}  &  \\
        & \ctrl{-4} & \targ{}  & \ctrl{-4} & \targ{}  &           &          & 
        \end{quantikz}
      }
      \caption{Previous circuit (quantum cost: 38) \cite{WGT+:2008}.}
      \label{fig:4mod7-v0-old}
    \end{subfigure}
  };

  \node[anchor=north west, inner sep=0] (B) at ([xshift=1.2cm]A.north east) {
    \begin{subfigure}{0.48\textwidth}
      \centering
      \adjustbox{max width=\linewidth}{
        \begin{quantikz}[row sep=0.36cm]
        & \targ{} &        &        & \targ{}   & \ctrl{4}  & \targ{}   & \ctrl{4}  & \targ{} &\ctrl{2}& \ctrl{3}  &   \\
        &		    & \ctrl{3}   & \targ{}   & \ctrl{-1} &           &           &           &           &           &           &   \\
        &           &            & \ctrl{-1} &           &           & \ctrl{-2} &           &           & \targ{}   &           &   \\
        &      		&            &           &           &\ctrl{1}   &           &\ctrl{1}   &           &\ctrl{-1}  &  \targ{}   &  \\
        &\ctrl{-4}  &	\targ{}	 &           & \ctrl{-4} & \targ{}   & \ctrl{-4} & \targ{}   & \ctrl{-4} &           &           &   \\
        \end{quantikz}
      }
      \caption{New Circuit ($m = 10$, quantum cost: 30).}
      \label{fig:4mod7-v0-new}
    \end{subfigure}
  };

\draw[thin, black]
  ($(A.north east)!0.5!(B.north west)$) -- 
  ($($(A.south east)!0.5!(B.south west)$)+(0,5pt)$);

\end{tikzpicture}
\caption{Previous best known circuit and new circuit for \texttt{4mod7-v0}.}
\label{fig:4mod7-v0}
\end{figure}

\begin{figure}[t]
\centering
\begin{tikzpicture}

  \node[anchor=north west, inner sep=0] (A) at (0,0) {
    \begin{subfigure}{0.48\textwidth}
      \centering
      \adjustbox{max width=\linewidth}{
        \begin{quantikz}[row sep=0.36cm]
        &		   &          &\targ{}   &\ctrl{3}  &         &\ctrl{1}  &         &\ctrl{2}  &          &\ctrl{4} &         & \\
        &		   &\targ{}   &\ctrl{-1} &\ctrl{2}  &\ctrl{2} &\targ{}   &\ctrl{2} &          &\targ{}   &         &\ctrl{2} & \\
        &\targ{}   &          &          &          &\ctrl{1} &          &\ctrl{1} &\targ{}   &\ctrl{-1} &         &\ctrl{1} & \\
        &\ctrl{-1} &          &          &\targ{}   &\targ{}  &\ctrl{-2} &\targ{}  &\ctrl{-1} &\ctrl{-2} &\ctrl{1} &\targ{}  & \\
        &		   &\ctrl{-3} &\ctrl{-4} &\ctrl{-1} &         &\ctrl{-3} &         &\ctrl{-2} &          &\targ{}  &         & \\
        \end{quantikz}
      }
      \caption{Previous circuit (quantum cost: 40) \cite{WGT+:2008}.}
      \label{fig:one-two-three-v0-old}
    \end{subfigure}
  };

  \node[anchor=north west, inner sep=0] (B) at ([xshift=1.2cm]A.north east) {
    \begin{subfigure}{0.48\textwidth}
      \centering
      \adjustbox{max width=\linewidth}{
        \begin{quantikz}[row sep=0.22cm]
        &		    &           & \targ{}   & \ctrl{3}  &           &           &           &           &           & \\
        &		    &           &           &           &           & \targ{}   &\ctrl{2}   &           &\ctrl{3}   & \\
        & \targ{}   &           &           & \ctrl{1}  & \targ{}   & \ctrl{-1} &           & \targ{}   &           & \\
        &\ctrl{-1}  & \targ{}   &\ctrl{-3}  & \targ{}   &\ctrl{-1}  &           & \targ{}   & \ctrl{-1} &           & \\
        &		    &\ctrl{-1}  &           &           &           & \ctrl{-3} &           &           & \targ{}   & \\
        \end{quantikz}
      }
      \caption{New Circuit ($m = 9$, quantum cost: 17).}
      \label{fig:one-two-three-v0-new}
    \end{subfigure}
  };

\draw[thin, black]
  ($(A.north east)!0.5!(B.north west)$) -- 
  ($($(A.south east)!0.5!(B.south west)$)+(0,5pt)$);

\end{tikzpicture}
\caption{Previous best known circuit and new circuit for \texttt{one-two-three-v0}.}
\label{fig:one-two-three-v0}
\end{figure}

\begin{figure}[t]
\centering
\begin{tikzpicture}

  \node[anchor=north west, inner sep=0] (A) at (0,0) {
    \begin{subfigure}{0.48\textwidth}
      \centering
      \adjustbox{max width=\linewidth}{
        \begin{quantikz}[row sep=0.33cm]
        &		    &\targ{}    &           &\ctrl{4}  &          &          &          &\ctrl{3} &  \\
        &\targ{}    &           &           &          &          &\ctrl{3}  &          &         &  \\
        &		    &\ctrl{-2}  &\targ{}    &\ctrl{2}  &          &          &\targ{}   &\ctrl{1} &  \\
        &\ctrl{-2}  &           &\ctrl{-1}  &          &\targ{}   &\ctrl{1}  &\ctrl{-1} &\targ{}  &  \\
        &           &           &\ctrl{-2}  &\targ{}   &\ctrl{-1} &\targ{}   &          &         &  \\
        \end{quantikz}
      }
      \caption{Previous circuit (quantum cost: 24) \cite{WGT+:2008}.}
      \label{fig:one-two-three-v3-old}
    \end{subfigure}
  };

  \node[anchor=north west, inner sep=0] (B) at ([xshift=1.2cm]A.north east) {
    \begin{subfigure}{0.48\textwidth}
      \centering
      \adjustbox{max width=\linewidth}{
        \begin{quantikz}[row sep=0.36cm]
        &		    &           &           & \targ{}   &\ctrl{2}   &           &           &           &           & \\
        & \targ{}   &           &           &           &           &           &\ctrl{2}   &           &           & \\
        &		    &\ctrl{1}   & \targ{}   &\ctrl{-2}  & \targ{}   &           &\ctrl{1}   & \targ{}   &\ctrl{2}   & \\
        &		    & \targ{}   &           &           &\ctrl{-1}  & \ctrl{1}  & \targ{}   &\ctrl{-1}  &           & \\
        &\ctrl{-3}  &           &\ctrl{-2}  &           &           & \targ{}   &           &           & \targ{}   & \\
        \end{quantikz}
      }
      \caption{New Circuit ($m = 9$, quantum cost: 17).}
      \label{fig:one-two-three-v3-new}
    \end{subfigure}
  };

  \draw[thin, black]
    ($(A.north east)!0.5!(B.north west)$) -- 
    ($($(A.south east)!0.5!(B.south west)$)+(0,6pt)$);

\end{tikzpicture}
\caption{Previous best known circuit and new circuit for \texttt{one-two-three-v3}.}
\label{fig:one-two-three-v3}
\end{figure}

Figures~\ref{fig:rd53} to \ref{fig:one-two-three-v3} present both the previously best-known circuits specified in \cite{WGT+:2008} and the newly developed circuits, along with their corresponding quantum costs.
At first glance, it may seem that the quantum cost simply decreases as the number of MCT gates decreases.
In general, this trend holds true, as observed in the cases of \texttt{rd53} (Figure~\ref{fig:rd53}), and \texttt{one-two-three-v0} (Figure~\ref{fig:one-two-three-v0}).
However, quantum cost can also be significantly reduced even if the number of MCT gates increases, provided that the added MCT gates have fewer control bits.
Such behavior is illustrated in \texttt{4mod7-v0} (Figure~\ref{fig:4mod7-v0}) and \texttt{one-two-three-v3} (Figure~\ref{fig:one-two-three-v3}), where circuits with more MCT gates still achieve lower overall quantum cost.
These results highlight that even small changes to the circuit structure, particularly in the number of control bits in MCT gates, can lead to a substantial reduction in quantum cost.
This phenomenon arises due to the exponential scaling of the costs associated with MCT gates, as shown in Table~\ref{tab:quantum_cost} in Section~\ref{sec:terminology}.

\subsection{Effect of Symmetry-Breaking Constraints}

\begin{figure}[]
	\includegraphics[width=\textwidth]{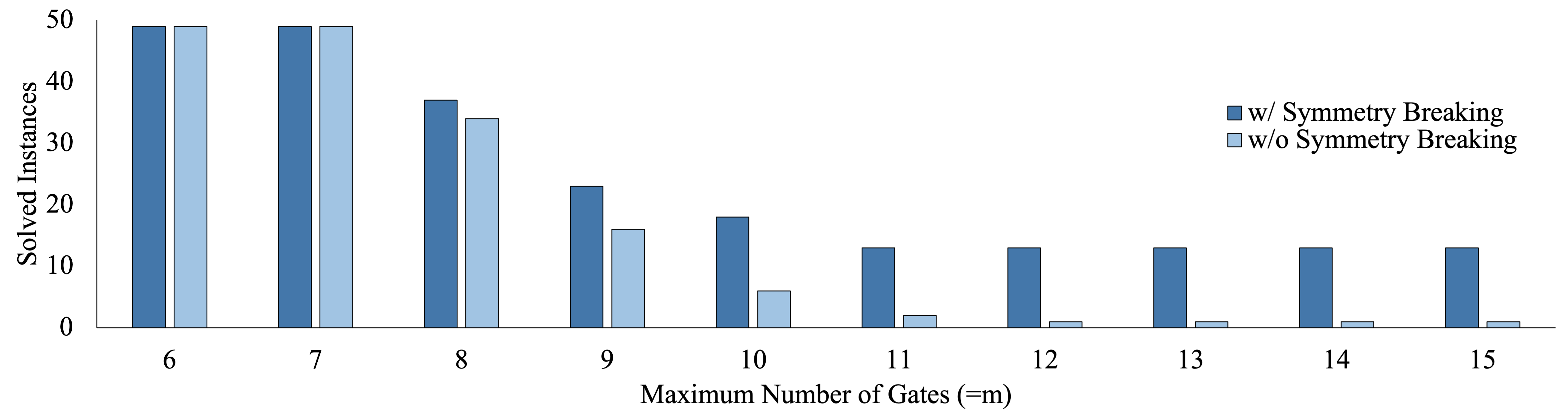}
	\caption{Comparison Symmetry-Breaking Constraints.}
	\label{fig:effect-symmetry}
\end{figure}

Figure~\ref{fig:effect-symmetry} demonstrates the benefit of using symmetry-breaking constraints~\eqref{eq:sym:empty_gate}-\eqref{eq:sym:same_target}.
Without symmetry-breaking constraints, all instances with $m=6$ and $m=7$ gates can still be solved, but the average solution times are 29\% and 139\% longer, respectively.
For $m\ge 8$ gates, the difference in solvability becomes apparent.
Out of the largest instances with $m=15$ gates, only \texttt{graycode6} can be solved without breaking symmetries, while 13 instances can be solved when the constraints are included.
It is not surprising that the symmetry-breaking constraints are more effective for longer circuits (i.e., large $m$ values), as they are expected to have more symmetric solutions.
More interestingly, adding the constraints outperforms the built-in symmetry detection in CP-SAT, which suggests that the symmetries observed in this paper are not obvious to detect automatically.

\begin{figure}[]
    \centering
	\includegraphics[width=\textwidth]{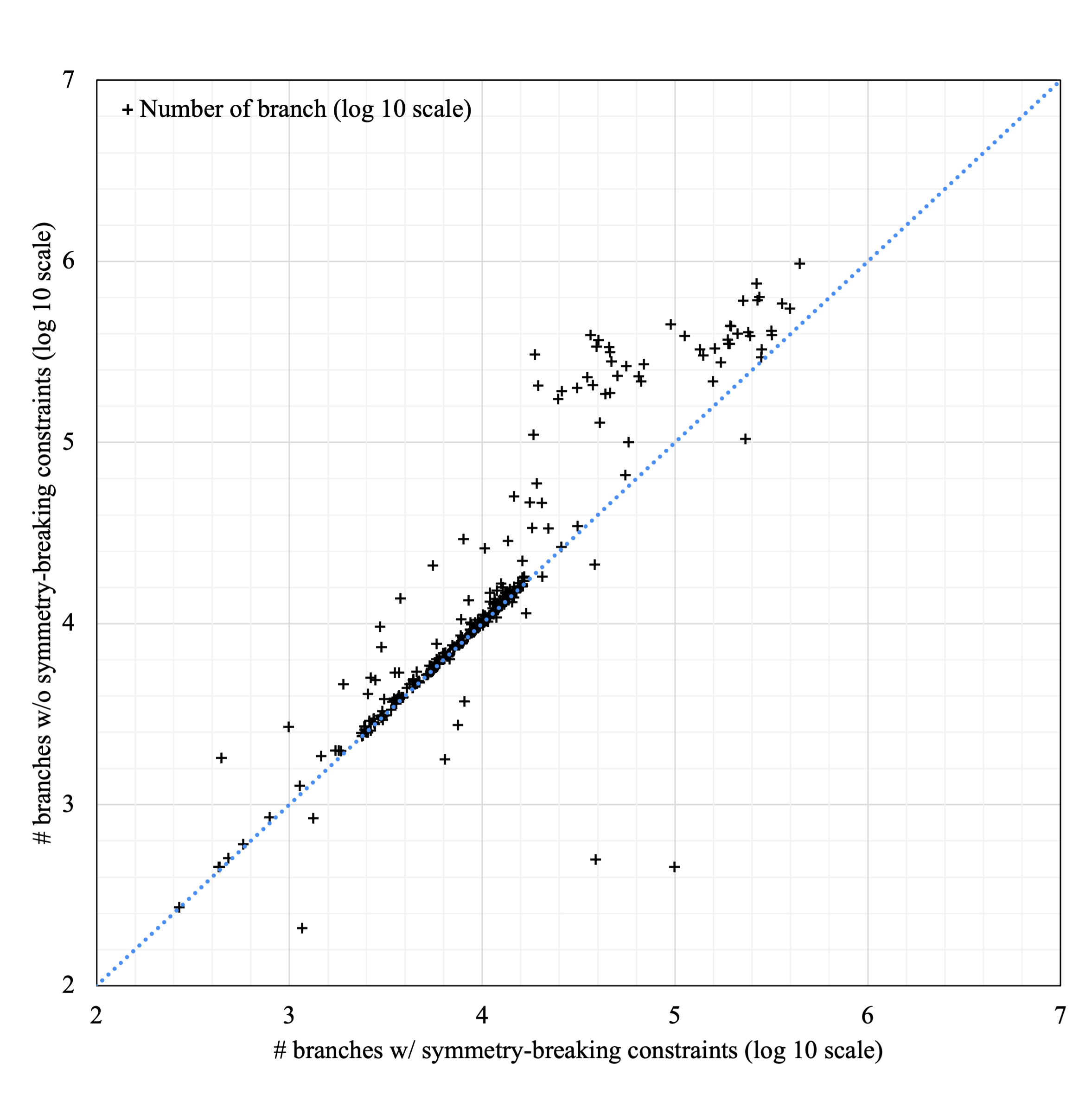}
	\caption{Branch Count Comparison Based on Symmetry-Breaking Constraints}
	\label{fig:symbrk-branches}
\end{figure}

The impact of the symmetry-breaking constraints also shows in the number of branches required to solve the given problem.
Figure~\ref{fig:symbrk-branches} compares the branch counts with and without symmetry-breaking constraints.
The horizontal axis shows the branch count when the symmetry-breaking constraints are applied, while the vertical axis presents the branch count without them.
Each datapoint corresponds to a single instance with a certain specification and number of gates $m$.
The plot uses log scales and the blue-dotted line indicates the diagonal.
If the point is located above this line, it means that the branch count is reduced when the symmetry-breaking constraints are incorporated into the optimization model.
It is clear that the symmetry-breaking constraints improve performance in almost all cases, with improvements of up to an order of magnitude.
This is particularly true for instances that previously required a large number of branches, i.e., instances that are difficult to solve without breaking symmetries.

\begin{table}[ht]
\centering
\begin{tabular}{clrrrrrr}
\toprule
 & & \multicolumn{2}{c}{\textbf{\#sol. w/o sym.}} & \multicolumn{2}{c}{\textbf{\#sol. w/ sym.}} & \multicolumn{2}{c}{\textbf{Reduced rate}} \\
\cmidrule(r){3-4} \cmidrule(r){5-6} \cmidrule(r){7-8}
\textbf{No.} & \textbf{Function} & Total & Optimal & Total & Optimal & Total (\%) & Optimal (\%) \\
\midrule
1 & toffoli-double    & 44778 & 40  & 14965 & 2  & 66.58 & 95.00 \\
2 & graycode6         & 46    & 6   & 9     & 1  & 80.43 & 83.33 \\
3 & mod10             & 11    & 8   & 9     & 6  & 18.18 & 25.00 \\
4 & 4mod7-v0          & 1     & 1   & 1     & 1  & 0.00  & 0.00  \\
5 & decod24-enable    & 1056  & 18  & 296   & 3  & 71.97 & 83.33 \\
6 & mod5mils          & 280   & 40   & 32    & 2  & 88.57 & 95.00  \\
7 & minialu           & 2862  & 33  & 1796  & 14 & 37.25 & 57.58 \\
8 & mod8-10           & 92    & 2   & 76    & 1  & 17.39 & 50.00 \\
9 & 4gt4-v0           & 219   & 1   & 63    & 1  & 71.23 & 0.00  \\
10 & rd32-v1          & 3464  & 148 & 747   & 30 & 78.44 & 79.73 \\
11 & 4gt10-v1         & 2368  & 5   & 537   & 2  & 77.32 & 60.00 \\
12 & decod24-v0       & 75    & 5   & 48    & 4  & 36.00 & 20.00 \\
13 & decod24-v1       & 3     & 2   & 2     & 1  & 33.33 & 50.00 \\
14 & decod24-v2       & 23    & 4   & 6     & 1  & 73.91 & 75.00 \\
\bottomrule
\end{tabular}
\vspace{0.5\baselineskip}
    \caption{Solution Count Comparison Based on Symmetry-Breaking Constraints.}
    \label{tab:feasible-solution-count}
\end{table}

Finally, the impact of the symmetry-breaking constraint can be examined by comparing the total number of feasible solutions, as well as the number of \emph{optimal} solutions, where optimal solutions here refer to circuits that achieve the optimal objective value.
Table~\ref{tab:feasible-solution-count} presents the total number of feasible solutions and optimal solutions that are identified with the \texttt{SearchForAllSolutions} feature of the CP solver, applied to the satisfiability version of the model with $m=6$ gates.
That is, the objective is either removed (for the total number of solutions) or a constraint is added to force the objective to take on the optimal value (for the number of optimal solutions).

Table~\ref{tab:feasible-solution-count} includes fourteen cases in which the solver successfully collected all feasible solutions within the specified computation time.
The values in the \emph{Reduced rate} column represent the percentage of solution count reduced by applying the symmetry-breaking constraints. 
Across all cases, the number of feasible solutions clearly decreases, from the minimum of 17.39\% to the maximum of 88.57\%.
An exceptional case of \texttt{4mod7-v0} does not show a change in the number of solutions because there is only one feasible solution in the feasible region.
Furthermore, in terms of the optimal solution count, eleven out of 14 cases show a reduction in the solution count when the symmetry-breaking constraints are applied.
As the number of feasible and optimal solutions in the feasible region decreases, the solver typically requires less time to find an optimal solution.
These results explicitly support the claim that the symmetry-breaking constraint enhances computational performance by effectively eliminating symmetric solutions from the feasible region.

\begin{figure}[t]
	\centering

	\begin{subfigure}{0.45\textwidth}
		\centering
		\adjustbox{max width=\linewidth}{
		\begin{quantikz}[row sep=0.4cm]
&\targ{}   &\targ{}   &          &         &          &\ctrl{4}  &  \\
&\ctrl{-1} &          &\targ{}   &\ctrl{3} &          &          &  \\
&          &\ctrl{-2} &          &\ctrl{2} &\targ{}   &\ctrl{2}  &  \\
&\ctrl{-3} &          &          &         &          &          &  \\
&          &          &\ctrl{-3} &\targ{}  &\ctrl{-2} &\targ{}   &  \\
		\end{quantikz}
		}
		\caption{Unswappable optimal circuit.}
		\label{fig:swappable-sols-1}
	\end{subfigure}
	\hspace{0.04\textwidth}
	\begin{subfigure}{0.45\textwidth}
		\centering
		\adjustbox{max width=\linewidth}{
		\begin{quantikz}[row sep=0.4cm]
&\targ{}   &\targ{}   &          &         &          &\ctrl{4}  &  \\
&          &\ctrl{-1} &\targ{}   &\ctrl{3} &          &          &  \\
&\ctrl{-2} &          &          &\ctrl{2} &\targ{}   &\ctrl{2}  &  \\
&          &\ctrl{-3} &          &         &          &          &  \\
&          &          &\ctrl{-3} &\targ{}  &\ctrl{-2} &\targ{}   &  \\
		\end{quantikz}
		}
		\caption{Swappable optimal circuit \#1.}
		\label{fig:swappable-sols-2}
	\end{subfigure}

	\begin{subfigure}{0.45\textwidth}
		\centering
		\adjustbox{max width=\linewidth}{
		\begin{quantikz}[row sep=0.4cm]
&\targ{}   &          &\targ{}   &         &          &\ctrl{4}  &  \\
&\ctrl{-1} &\targ{}   &          &\ctrl{3} &          &          &  \\
&          &          &\ctrl{-2} &\ctrl{2} &\targ{}   &\ctrl{2}  &  \\
&\ctrl{-3} &          &          &         &          &          &  \\
&          &\ctrl{-3} &          &\targ{}  &\ctrl{-2} &\targ{}   &  \\
		\end{quantikz}
		}
		\caption{Swappable optimal circuit \#2.}
		\label{fig:swappable-sols-3}
	\end{subfigure}
	\hspace{0.04\textwidth}
	\begin{subfigure}{0.45\textwidth}
		\centering
		\adjustbox{max width=\linewidth}{
		\begin{quantikz}[row sep=0.4cm]
&\targ{}   &          &         &\targ{}   &          &\ctrl{4}  &  \\
&\ctrl{-1} &\targ{}   &\ctrl{3} &          &          &          &  \\
&          &          &\ctrl{2} &\ctrl{-2} &\targ{}   &\ctrl{2}  &  \\
&\ctrl{-3} &          &         &          &          &          &  \\
&          &\ctrl{-3} &\targ{}  &          &\ctrl{-2} &\targ{}   &  \\
		\end{quantikz}
		}
		\caption{Swappable optimal circuit \#3.}
		\label{fig:swappable-sols-4}
	\end{subfigure}

	\caption{Swappable group of circuits found from the satisfiability version of the proposed optimization model for \texttt{4gt10-v1}, $m=6$.}
	\label{fig:swappable-sols}
\end{figure}

\begin{figure}
		\centering
		\adjustbox{max width=0.45\linewidth}{
		\begin{quantikz}[row sep=0.4cm]
&\targ{}   &          &\ctrl{4} &\targ{}   &          &\ctrl{4}  &  \\
&\ctrl{-1} &\targ{}   &\ctrl{3} &          &          &          &  \\
&          &          &         &\ctrl{-2} &\targ{}   &\ctrl{2}  &  \\
&\ctrl{-3} &          &         &          &          &          &  \\
&          &\ctrl{-3} &\targ{}  &          &\ctrl{-2} &\targ{}   &  \\
		\end{quantikz}
		}
		\caption{Unswappable optimal circuit from the satisfiability version of the proposed optimization model for \texttt{4gt10-v1}, $m=6$.}
		\label{fig:unswappable-sol}
\end{figure}

As a simple example, Figures~\ref{fig:swappable-sols} and \ref{fig:unswappable-sol} illustrate the optimal circuits discovered for function \texttt{4gt10-v1}, which corresponds to row 11 in Table~\ref{tab:feasible-solution-count}.
Without symmetry-breaking constraints, the solver finds all five optimal circuits shown in Figure~\ref{fig:swappable-sols} and Figure~\ref{fig:unswappable-sol}.
However, it is evident that the four circuits in Figure~\ref{fig:swappable-sols}, while structurally different, are considered symmetric according to the symmetries defined in Section~\ref{sec:symmetry-breaking-constraints}.

For example, Figure~\ref{fig:swappable-sols-2} has a Swap 3 into Figure~\ref{fig:swappable-sols-1} between the first and second gates, and Figure~\ref{fig:swappable-sols-3} has a Swap 2 into Figure~\ref{fig:swappable-sols-1} between the second and third gates.
Lastly, Figure~\ref{fig:swappable-sols-4} has a Swap 2 into Figure~\ref{fig:swappable-sols-3} between the third and fourth gates. 
It can be checked that Figure~\ref{fig:swappable-sols-1} is an unswappable circuit.
Figure~\ref{fig:unswappable-sol} shows another solution that satisfies the input function specification but the gates are not swappable nor could not be derived from the group of circuits shown in Figure~\ref{fig:swappable-sols}.

In contrast, when symmetry-breaking constraints are applied, only two circuits—\ref{fig:swappable-sols-1} and \ref{fig:unswappable-sol}—are found.
This outcome confirms that the proposed symmetry-breaking constraints successfully eliminate symmetric feasible solutions, and more specifically, symmetric solutions within a group of optimal solutions, thereby reducing redundancy in the feasible solution space.

\subsection{Comparative Analysis}
A comparative analysis is provided to show how optimization-based methods fit in with other methods considered in the literature.
Papers are selected that synthesize the entire circuit from scratch (as opposed to post-processing), that report quantum cost and computation time for every experiment, and for which the benchmark suite overlaps significantly with the current paper.
This results in six studies that are summarized by Table~\ref{tab:comparing_papers}.
The papers provide a mix of exact and heuristic methods that provide different trade-offs in terms of solution time and solution quality.
Also note that while all papers report quantum cost, the methods themselves often use a different objective function as a proxy, such as minimizing the number of gates or the number of qubits.

\begin{table}][H]
\centering
\begin{tabular}{cccccccc} \toprule
  \textbf{Label} &
  \textbf{Method} &
  \textbf{Objective} &
  \textbf{Type} &
  \textbf{Gate Lib.} &
  \textbf{Max Time} &
   \\ \midrule
\cite{lin2014rmdds} &
  \begin{tabular}[c]{@{}c@{}}Reed-Muller \\ + decision diagram\end{tabular} &
  Gate count &
  Heuristic &
  MCT &
  600s &
   \\ \hline
\cite{krishna2014efficient} &
  \begin{tabular}[c]{@{}c@{}}Subgraph matching \\ + decision diagram\end{tabular} &
  Qubit count &
  Heuristic &
  \begin{tabular}[c]{@{}c@{}} MCT up to \\ two controls \end{tabular} &
  $<$1s  &
   \\ \hline
\cite{grosse2009exact} &
  \begin{tabular}[c]{@{}c@{}}Satisfiability\\ problem\end{tabular} &
  Gate count &
  Exact &
  MCT &
  5,000s &
   \\ \hline
\cite{wille2008quantified} &
  \begin{tabular}[c]{@{}c@{}}Quantified Boolean
  \\ satisfiability problem\end{tabular} &
  Gate count &
  Exact &
  MCT &
  2,000s &
   \\ \hline
\cite{JungChoi2021-MultiCommodityNetwork} &
  \begin{tabular}[c]{@{}c@{}}Optimization model \\ + MIP solver\end{tabular} &
  Quantum cost &
  \begin{tabular}[c]{@{}c@{}}Exact \end{tabular} &
  MCT &
  36,000s &
   \\ \hline
\cite{jung2025new} &
  \begin{tabular}[c]{@{}c@{}}Optimization model \\ + Branch-and-Cut \end{tabular} &
  Quantum cost &
  \begin{tabular}[c]{@{}c@{}}Exact \end{tabular} &
  MCT &
  7,200s &
   \\ \hline
  CP  &
  \begin{tabular}[c]{@{}c@{}}Optimization model\\ + CP solver\end{tabular} &
  Quantum cost &
  Exact &
  MCT &
  3,600s &
  \multicolumn{1}{l}{} \\
  \bottomrule
\end{tabular}%
\vspace{0.5\baselineskip}
\caption{Summary of Papers for Comparative Analysis.}
\label{tab:comparing_papers}
\end{table}

Figure~\ref{fig:CA_plots-1} and \ref{fig:CA_plots-2} includes 42 plots that show the performance of each method with a circle on the two-dimensional \emph{time-quantum cost} plane.
To generate these plots, this paper selects a solution with the lowest quantum costs regardless of whether optimality is proven.
The labels in the plot correspond to each study as presented in the \emph{Label} column of Table~\ref{tab:comparing_papers}.
The gray circles correspond to the solutions that have not been proven to be optimal, while the blue circles indicate that the solution was proven to be optimal in the selected $m$ in terms of quantum cost, which is only optimized directly by \cite{JungChoi2021-MultiCommodityNetwork}, \cite{jung2025new}, and this paper.
While the current paper only considers circuits for a given number of qubits, \cite{krishna2014efficient} and \cite{lin2014rmdds} introduce additional qubits to obtain a feasible design.
The number of qubits used is indicated by the relative size of the circle in Figure~\ref{fig:CA_plots-1} and \ref{fig:CA_plots-2}.
A circuit is considered better if it has a lower quantum cost, and methods are preferred when they have a shorter solution time and introduce fewer ancilla qubits.
That is, small circles on the lower left are preferred.

Figure~\ref{fig:CA_plots-1} and \ref{fig:CA_plots-2} show that the current method outperforms the other methods in quantum cost (or in solution time if the quantum cost is tied) in 25 of 42 cases.
These function names are marked by a blue box in the top right corner.
Some large instances, such as \texttt{aj-e11} and \texttt{rd53}, were not tackled by \cite{JungChoi2021-MultiCommodityNetwork} and \cite{jung2025new}, but the new model produces (near-)optimal solutions in the given time limit.
There are also several instances where the ability to handle an increased number of gates results in lower quantum costs, without sacrificing computation time (e.g. \texttt{one-two-three-v0}, \texttt{one-two-three-v3}, \texttt{4mod7-v0}).
Even when the new model does not reduce quantum cost, in many cases it can still provide \emph{guaranteed} optimality in a shorter computation time than other methods (e.g., \texttt{4gt10-v0}, \texttt{4gt10-v1}, \texttt{4gt12-v0}, \texttt{4gt12-v1}, \texttt{4gt4-v0}, \texttt{4gt13-v0}, \texttt{4gt5-v0}, \texttt{4gt5-v1}, \texttt{alu-v1}, \texttt{alu-v3}, \texttt{alu-v4}, \texttt{decod24-enable}, \texttt{decod24-v1}, \texttt{decod24-v2}, \texttt{minialu}, \texttt{mod10}, \texttt{mod5d2}, \texttt{one-two-three-v1}, \texttt{one-two-three-v2}, \texttt{rd32-v1}).

\thispagestyle{empty}
\vspace*{\fill}
\begin{center}
  \includegraphics[width=0.95\textwidth]{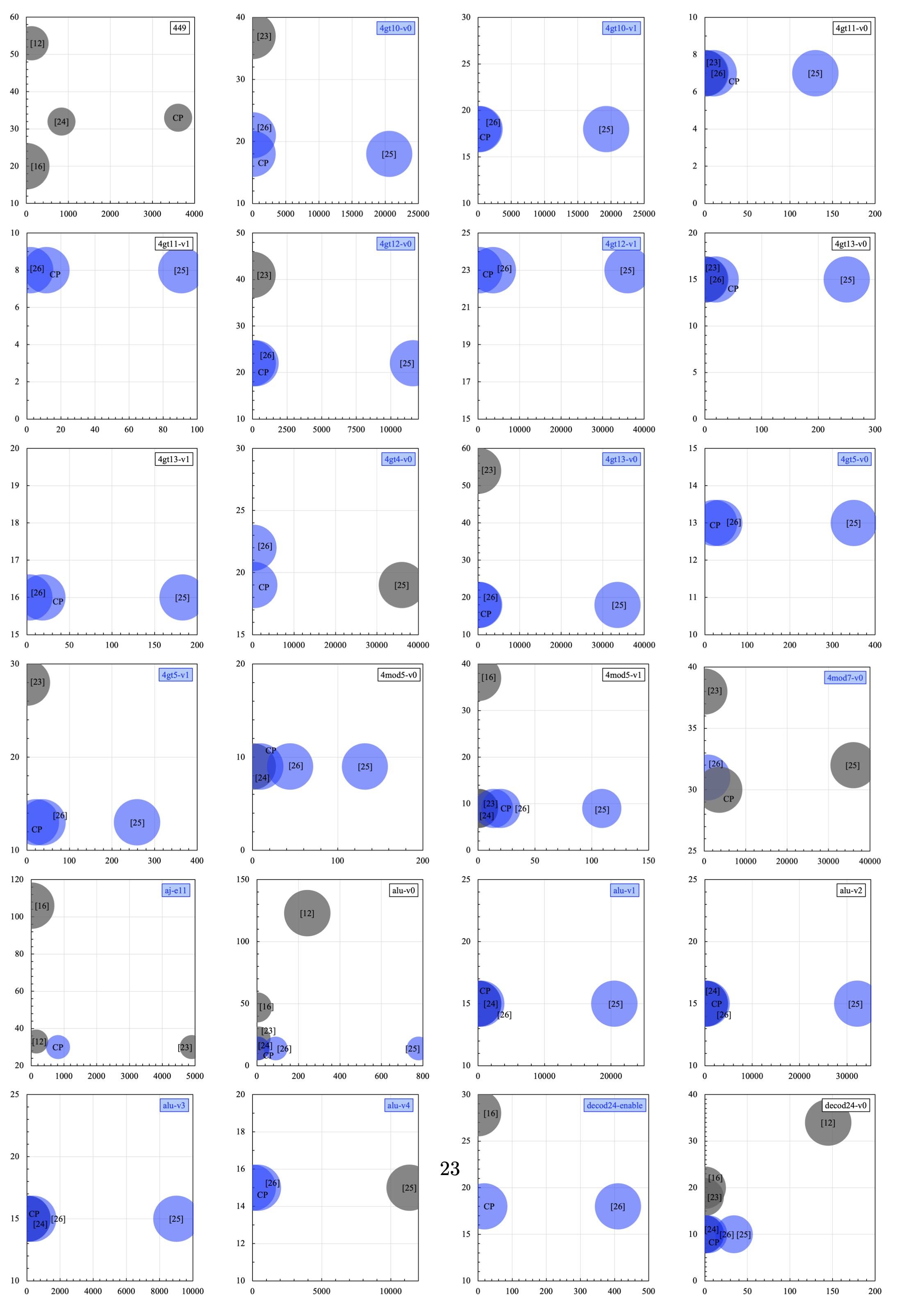}
  \vspace{1em}
  
  \captionof{figure}{Comparison of Best Results with Previous Studies.}
  \label{fig:CA_plots-1}
\end{center}
\vspace*{\fill}

\clearpage
\thispagestyle{empty}
\vspace*{\fill}
\begin{center}
  \includegraphics[width=0.95\textwidth]{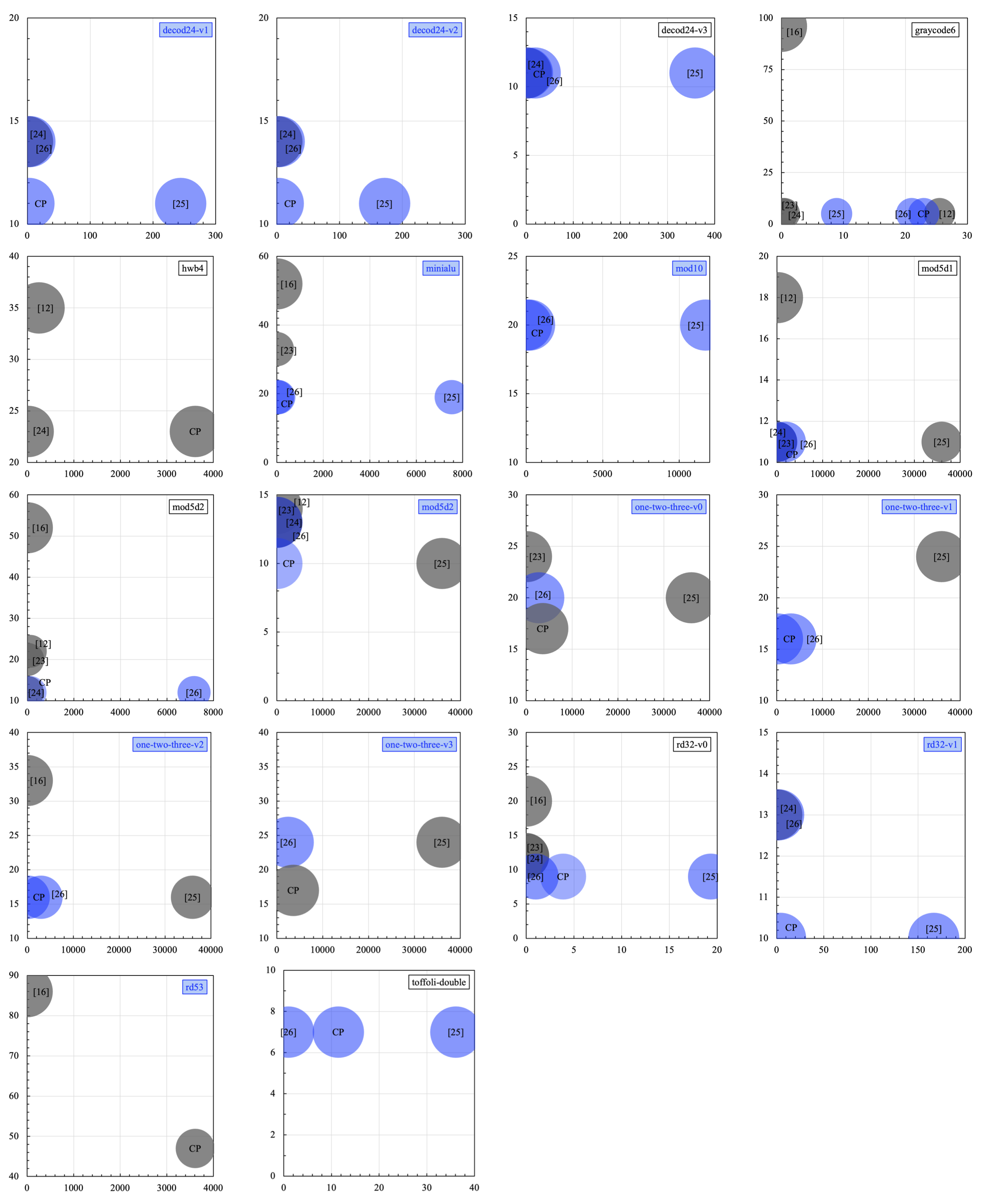}
  \vspace{1em}
  
  \captionof{figure}{Comparison of Best Results with Previous Studies.\emph{(cont.)}}
  \label{fig:CA_plots-2}
\end{center}
\vspace*{\fill}

\clearpage
\indent In general, the improved performance may come at the cost of a longer solution time, and the other methods may be better suited for very time-constrained environments.
In particular, \cite{wille2008quantified} provides high-quality solutions in a short amount of time, but improvements are still possible for some instances.
For example, for \texttt{decod24-v1}, \cite{wille2008quantified} presents a circuit with six gates and a quantum cost of 14.
This solution is found by first minimizing the number of gates, and then minimizing the quantum cost when the number of gates is fixed to six.
The new optimization model, however, can directly solve the case with seven gates to find a circuit with quantum cost 11.
This indicates that even for a small circuit of only four qubits, significant savings may still be obtained: a 21\% cost reduction in this case.

\section{Conclusion}
\label{sec:conclusion}

This paper introduces a new optimization model along with a set of symmetry-breaking constraints tailored for the design of quantum circuits using multiple-control Toffoli (MCT) gates.
The paper begins by outlining essential quantum computing concepts, thereby providing sufficient background for understanding the proposed work.
The newly developed optimization model simplifies the constraint structure compared to previous formulations, resulting in a substantial reduction in the number of binary variables required in the model.
This simplification not only enhances the tractability but also contributes to improved computational efficiency.
In addition, a set of symmetry-breaking constraints is proposed specifically to eliminate symmetric solutions caused by swappable gate pairs, thereby reducing the size of the feasible region without compromising the optimality of the solution.

Computational experiments demonstrate that the proposed model enables both CP and MIP solvers to solve instances significantly faster.
In particular, CP solvers achieve up to two orders of magnitude speedup on certain benchmark cases.
Furthermore, experiments on larger instances—featuring up to seven qubits and 15 gates—led to the discovery of four new quantum circuits that outperform the previously best-known solutions in terms of quantum cost.
The effectiveness of symmetry-breaking constraints is especially notable in large-scale instances.

To provide more insight, a series of in-depth analyses is conducted examining the change in branch counts and the number of feasible and optimal solutions.
These results validate the role of symmetry-breaking constraints in enhancing solver performance by guiding the search away from redundant solutions.
The paper also provides an illustrative example that shows how symmetric circuit structures are eliminated through the inclusion of the symmetry-breaking constraints.
Finally, a detailed comparison between different methodologies reveals an important trade-off: while optimization-based methods may require more computational time than heuristic or rule-based approaches, they offer the advantage of guaranteed optimality and can produce circuit designs of superior quality.

There are several promising directions for future work.
Although the proposed model is effective for circuits of moderate size, technical challenges remain in scaling the approach to handle circuits with a larger number of qubits and gates.
One promising approach is the application of decomposition techniques; the structure of the optimization model aligns itself naturally with decomposition, as the problem reduces to a collection of independent minimum-cost flow subproblems once the binary variables are fixed.
Another direction is to generalize the model to accommodate different gate libraries, or to move beyond MCT gates and directly optimize over circuits built from elementary quantum gates.
Such extensions would broaden the applicability of the model and further integrate optimization-based circuit synthesis into the broader landscape of quantum computing.

\backmatter

\bmhead{Acknowledgements}
This research was partly funded by the NSF AI Institute for Advances in Optimization (Award 2112533).

\section*{Declarations}
\begin{itemize}
\item Funding: This research was partly funded by the NSF AI Institute for Advances in Optimization (Award 2112533).
\item Conflict of interest/Competing interests: The authors declare no conflict of interest.
\item Ethics approval and consent to participate: Not applicable.
\item Consent for publication: Not applicable.
\item Data availability: The entire functions are available in \url{https://revlib.org} \cite{WGT+:2008}.
\item Materials availability: Not applicable.
\item Code availability: Code will be made available at the time of publication.
\item Author contribution: Conceptualization (J. Jung, K. Dalmeijer); Methodology (J. Jung, K. Dalmeijer, P. Van Hentenryck); Software (J. Jung); Validation, Formal Analysis, Investigation (J. Jung, K. Dalmeijer, P. Van Hentenryck); Data Curation (J. Jung); Writing - Original Draft (J. Jung, K. Dalmeijer); Writing - Review \& Editing (K. Dalmeijer, P. Van Hentenryck); Visualization (J. Jung); Supervision (K. Dalmeijer, P. Van Hentenryck); Project Administration (P. Van Hentenryck); Funding Acquisition (P. Van Hentenryck)
\end{itemize}
\noindent

\bibliography{references}
\end{document}